\newtheorem{theorem}{Theorem}[section]
\newenvironment{customthm}[1]
  {\innercustomthm}
  {\endinnercustomthm}
\newtheorem{corollary}[theorem]{Corollary}
\newtheorem{lemma}[theorem]{Lemma}
\newtheorem{prop}[theorem]{Proposition}
\theoremstyle{remark}
\newtheorem{remark}[theorem]{Remark}
\theoremstyle{definition}
\newtheorem{definition}[theorem]{Definition}
\newtheorem{question}[theorem]{Question}
\title{Invariant Keisler measures for $\omega$- categorical structures}
\author{Paolo Marimon}
\newcommand{\indep}[2]{%
  \mathrel{
    \mathop{
      \vcenter{
        \hbox{\oalign{\noalign{\kern-.3ex}\hfil$\vert$\rlap{$^\mathrm{#2}$}\hfil\cr
              \noalign{\kern-.7ex}
              $\smile$\cr\noalign{\kern-.3ex}}}
      }
    }\displaylimits_{#1}
  }
}
\newcommand{\notindep}[2]{%
  \mathrel{
    \mathop{
      \vcenter{
        \hbox{\oalign{\noalign{\kern-.3ex}\hfil$\not\vert$\rlap{$^\mathrm{#2}$}\hfil\cr
              \noalign{\kern-.7ex}
              $\smile$\cr\noalign{\kern-.3ex}}}
      }
    }\displaylimits_{#1}
  }
}
\begin{document}

\maketitle
\begin{abstract}
A recent article of Chernikov, Hrushovski, Kruckman, Krupinski, Moconja, Pillay and Ramsey finds the first examples of simple structures with formulas which do not fork over the empty set but are universally measure zero. In this article we give the first known simple $\omega$-categorical counterexamples. These happen to be various $\omega$-categorical Hrushovski constructions. Using a probabilistic independence theorem from Jahel and Tsankov, we show how simple $\omega$-categorical structures where a formula forks over $\emptyset$ if and only if it is universally measure zero must satisfy a stronger version of the independence theorem. 
\end{abstract}

\section{Introduction}

Keisler measures yield a natural notion of smallness for a definable subset of a structure: a subset $X$ of $M^x$  defined by $\phi(x,a)$ is \textbf{universally measure zero} if it is assigned measure zero by every invariant Keisler measure on $M$. A more classical model theoretic notion of smallness is forking: dividing captures the idea that a small subset of a model can be "moved enough" by automorphisms so as to not overlap with itself.\\

In stable theories, a definable set is universally measure zero if and only if it forks over $\emptyset$ \cite{lots_of_authors}. This is also the case in $\omega$\Hyphdash* categorical NIP theories \cite{SamI}. Until recently, it was unknown whether the two notions coincided in simple theories. The first counterexample, showing that in simple theories there can be non-forking formulas which are universally measure zero is given in \cite{lots_of_authors}. Using the same technique, the authors also give examples of simple groups which are not definably amenable and prove some positive results in the context of small theories. Neither of the counterexamples given is $\omega$\Hyphdash* categorical. The theory of the first counterexample contains as a reduct the theory of the free action of the free group $\mathrm{F}_5$ on an infinite set, which has infinitely many $1$-types over a single parameter. Meanwhile, on the definably amenable group case, all groups definable in an $\omega$\Hyphdash* categorical simple theory are definably amenable \cite[Corollary 4.14]{lots_of_authors}. Indeed, $\omega$\Hyphdash* categorical supersimple groups are actually amenable being finite-by-abelian-by-finite \cite{WagEv}. 
\\

It is natural to ask whether adding the assumption of $\omega$\Hyphdash* categoricity we can prove that being universally measure zero is the same as forking. Firstly, the known counterexample makes heavy use of a construction which is inherently not $\omega$\Hyphdash* categorical. Secondly, $\omega$\Hyphdash* categoricity implies that any invariant Keisler measure is also definable, i.e. the set of parameters $a$ for which $\mu(\phi(x,a))=\alpha$ is $\emptyset$-definable. In general, this is a substantially stronger assumption than invariance.\\ %\textcolor{red}{Finally, \cite{Hill} proves that for an algebraically trivial Fra\"{i}ss\'{e} class, being supersimple of $SU$-rank $1$ implies $MS$-measurability, which implies the equivalence of being universally measure zero and forking over the emptyset.}\\

Another motivation for an $\omega$\Hyphdash* categorical counterexample comes from the study of $\omega$\Hyphdash* categorical $MS$-measurable structures \cite{MS}. Until recently \cite{Measam, me}, it was an open question whether all supersimple $\omega$\Hyphdash* categorical structures of finite $SU$-rank are $MS$-measurable. Indeed, this question of Elwes and Macpherson \cite{EM} was the initial motivation for our study of Keisler measures in the context of $\omega$\Hyphdash* categorical Hrushovski constructions. An $MS$-measurable structure \cite{MS} has a dimension-measure function which is definable, finite and such that the dimension and measures satisfy Fubini's theorem. In \cite{me}, we showed that in an $\omega$\Hyphdash* categorical $MS$-measurable structure we can always take the dimension part of the dimension-measure to be $SU$-rank. With this observation, we can see that if an $\omega$\Hyphdash* categorical structure is $MS$-measurable, then a formula forks over the $\emptyset$ if and only if it is universally measure zero.\\

In this article, we show how for various classes of supersimple $\omega$\Hyphdash* categorical Hrushovski constructions we have formulas that do not fork over $\emptyset$ but are universally measure zero:
\begin{customthm}{\ref{mainthm}}
There are $\omega$\Hyphdash* categorical supersimple theories $T$ of finite $SU$-rank with a formula $\phi(x, a)$ which does not fork over the empty set, but which is universally measure zero. 
\end{customthm}
More generally, in Theorem \ref{strongITT} we show that if forking and being universally measure zero agree in a simple $\omega$\Hyphdash* categorical structure, then it must satisfy a stronger version of the independence theorem. It is easy to build $\omega$\Hyphdash* categorical Hrushovski constructions for which this fails. These structures are  extremely amenable in the sense of \cite{FOAmen}, which implies the existence of invariant types, and so invariant Keisler measures, in each variable.\\

We begin with Section \ref{K&E}, where we introduce ergodic measures and Keisler measures. Ergodic measures simplify our study since any measure can be considered as an "integral average" of them. Meanwhile, Keisler measures are the natural notion of a measure on a first-order structure. In Section \ref{WAI}, we study the $L^2$-spaces associated with an invariant Keisler measure. From some results of Jahel and Tsankov \cite[Theorem 3.2, Corollary 3.5]{JahelT}, we know that in $\omega$\Hyphdash* categorical structures, ergodic measures are better behaved and a weak form of algebraic independence implies a form of probabilistic independence in the measure (Corollary \ref{probind}). In Section \ref{SIT}, we show how in simple $\omega$\Hyphdash* categorical structures with forking and being universally measure zero agreeing we have a stronger version of the independence theorem (Theorem \ref{strongITT}). We also draw some implications for $\omega$\Hyphdash* categorical $MS$-measurable structures. Finally, in Section \ref{counterex}, we conclude giving the example of an $\omega$\Hyphdash* categorical structure, supersimple of finite $SU$-rank with a formula which does not fork over the empty set but which is universally measure zero.\\

This article requires some knowledge in model theory. Chapters 1-4 of Tent \& Ziegler's book \cite{TZ} should be sufficient, together with Ch.16 of \cite{Poizat} for understanding imaginaries and weak elimination of imaginaries. On simple theories, Chapters 2 and 3 of \cite{Kimsimp} cover the relevant definitions and results, including the definition of $SU$-rank and the independence theorem. We also require some basic knowledge of Hilbert spaces and $L^2$-spaces, such as Chapter 1 of \cite{Conway}. In Section \ref{K&E} we give a self-contained introduction to ergodic measures and Keisler measures. The reader interested in $MS$-measurable structures should consult \cite{MS} or \cite{EM}. This article does not require knowledge of Hrushovski constructions since all of the properties that we use are listed in Theorem \ref{constr}. Further details about $\omega$\Hyphdash* categorical Hrushovski constructions can be found in \cite[\S 6.2.1]{Wagner:ST}, while \cite{me}, especially in the appendix, provides the details on the specific construction we give as an example in Theorem \ref{mainthm}.\\

To conclude, we provide some notation and conventions. Firstly, we work with a complete countable first-order $\mathcal{L}$-theory $T$. From Section \ref{WAI}, by $\mathcal{M}$ we denote the countable model of an $\omega$\Hyphdash* categorical theory. We write $M$ for when we consider $\mathcal{M}$ as a set. We use greek letters $\phi, \psi, \chi, \dots$ to refer to formulas. The letters $x,y,z, \dots $ indicate variables, and may also indicate a finite tuple of variables. Similarly, the lowercase letters $a,b,c, \dots$ indicate parameters from $M$, and may also indicate a finite tuple. Meanwhile, we indicate subsets of $M$ by the uppercase $A, B, C, \dots$ For $a, a'$ tuples from $\mathcal{M}$ and $A\subseteq M$, we write $a\equiv_A a'$ to say that $a$ and $a'$ have the same type over $A$.\\

\textbf{Acknowledgements:} This article was written during my PhD at Imperial College London under the supervision of David Evans and Charlotte Kestner, both of whom I thank for their support and help. Nicholas Ramsey gave an excellent talk at Imperial about \cite{lots_of_authors}, which motivated me to look at whether $\omega$\Hyphdash* categorical Hrushovski constructions may also have been a counterexample. I am also grateful to my friend Matteo Tabaro for helpful discussions during my learning of ergodic theory. Finally, some of the ideas behind this article are greatly indebted to Ehud Hrushovski and his unpublished article \cite{AER}, which he kindly shared with me. 
 
\section{Keisler measures and ergodic measures}\label{K&E}

We begin with a brief and self-contained introduction to ergodic measures and Keisler measures. Firstly, we introduce ergodic measures in a general context. Then, we explain their utility for the study of Keisler measures. 

\subsection{Ergodic measures}

Ergodic measures are an essential tool in our paper. They are better behaved than invariant measures and any invariant measure can be decomposed as an integral average of ergodic measures. Here we briefly introduce these measures and mention some basic results about them. Chapter 12 of Phelps' book \cite{Choquet} covers the theory we discuss at the adequate level of generality for our purposes.\\

We work in the context of a topological group $G$ acting on a topological space $X$ via a continuous action $\cdot:G\times X\to X$. When $X$ is compact and Hausdorff, we call this action a $\pmb{G}$\textbf{-flow}. Let $\mathcal{B}(X)$ be the set of Borel subsets of $X$, and let $\mu:\mathcal{B}(X)\to [0,1]$ be a Borel probability measure. We say that $\mu$ is $\pmb{G}$\textbf{-invariant} if for any $\tau\in G$ and any $A\in\mathcal{B}(X)$, we have that $\tau\cdot A\in \mathcal{B}(X)$ and $\mu(\tau\cdot A)=\mu(A)$. 

\begin{definition}
We say that a $G$-invariant Borel probability measure $\mu$ is $\pmb{G}$-\textbf{ergodic} if for all $A\in\mathcal{B}(X)$, we have that if for all $\tau\in G$, 
\[\mu(A	\bigtriangleup \tau \cdot A)=0, \]
then either $\mu(A)=0$ or $\mu(A)=1$.
\end{definition}

\noindent An alternative definition tells us that any invariant function is constant \cite[Prop.2.14]{EinsW}:

\begin{prop}\label{erginv} Let $(X, \mathcal{B}, \mu)$ be a probability space and $G$ be a group acting on $X$ such that $\mu$ is $G$-invariant. Then, the following are equivalent:
\begin{enumerate}
    \item the measure $\mu$ is $G$-ergodic; and
    \item any measurable function $f:X\to\mathbb{C}$, which is $G$-invariant almost everywhere (i.e. for any $\tau\in G, f\circ \tau =f$ a.e.) is constant almost everywhere. 
\end{enumerate}
\end{prop}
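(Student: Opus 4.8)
The plan is to prove the equivalence of $G$-ergodicity with the statement that every a.e.-$G$-invariant measurable function $f : X \to \mathbb{C}$ is a.e. constant. I would treat the two directions separately, and in each case it suffices to work with real-valued $f$ (splitting into real and imaginary parts, since a complex function is a.e.-invariant iff both parts are, and a.e.-constant iff both parts are).

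\smallskip

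For the direction $(2) \Rightarrow (1)$: suppose every a.e.-invariant function is a.e.\ constant, and let $A \in \mathcal{B}(X)$ satisfy $\mu(A \bigtriangleup \tau \cdot A) = 0$ for all $\tau \in G$. Then $f = \mathbf{1}_A$ is measurable, and for each $\tau \in G$ we have $f \circ \tau = \mathbf{1}_{\tau^{-1} \cdot A}$, which agrees with $f$ outside the null set $A \bigtriangleup \tau^{-1} \cdot A$; hence $f$ is a.e.-$G$-invariant. By hypothesis $f$ is a.e.\ constant, and since it only takes values $0$ and $1$, that constant is $0$ or $1$, giving $\mu(A) = 0$ or $\mu(A) = 1$.

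\smallskip

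For the direction $(1) \Rightarrow (2)$: assume $\mu$ is $G$-ergodic and let $f : X \to \mathbb{R}$ be measurable and a.e.-$G$-invariant. The standard trick is to consider the superlevel sets $A_r = \{x \in X : f(x) > r\}$ for $r \in \mathbb{R}$. Each $A_r$ is Borel, and from $f \circ \tau = f$ a.e.\ one gets $\mu(A_r \bigtriangleup \tau \cdot A_r) = 0$ for every $\tau$, so ergodicity forces $\mu(A_r) \in \{0, 1\}$. Define $c = \sup\{ r : \mu(A_r) = 1\}$ (with the usual conventions if the set is empty or all of $\mathbb{R}$; one should note $\mu(A_r) \to 1$ as $r \to -\infty$ and $\mu(A_r)\to 0$ as $r\to +\infty$ by continuity of measure, so $c$ is finite). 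Then $\{f < c\} = \bigcup_{n} \{f \le c - 1/n\}$ is a countable union of sets of measure zero, hence null, and likewise $\{f > c\} = \bigcup_n A_{c + 1/n}$ is null; therefore $f = c$ a.e.

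\smallskip

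I do not expect a serious obstacle here: the only mild subtleties are the reduction to real-valued functions, handling the boundary/limiting cases in the definition of $c$ (using continuity of $\mu$ along monotone sequences of sets), and being careful that ``a.e.-invariant'' means invariant for each fixed $\tau$ up to a null set depending on $\tau$ — which is exactly enough to run the superlevel-set argument, since countably many null sets suffice. The argument is standard and appears in \cite{EinsW}; I would present it compactly along the lines above.
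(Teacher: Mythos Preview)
Your proof is correct and is the standard argument. Note, however, that the paper does not actually prove this proposition: it is stated with a citation to \cite[Prop.~2.14]{EinsW} and no proof is given in the text. Your write-up is essentially what one finds in that reference, so there is nothing to compare; if anything, you are supplying the details the paper chose to omit.
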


We are interested in studying $G$-ergodic measures since, when $X$ is metrizable, they yield an ergodic decomposition of any $G$-invariant measure. Hence, their study is essential to the understanding of the $G$-invariant measures on $X$. Below, for $f\in C(X, \mathbb{C})$, we write $\mu(f)=\int_X f \mathrm{d}\mu$. From \cite[p.77]{Choquet} we have:

\begin{theorem}[Ergodic decomposition] \label{ergdecomp} Let $X$ be a compact metrizable space with a group $G$ acting continuously on it. Let $\mu$ be a $G$-invariant Borel probability measure on $X$. Then, the space 
$\mathfrak{M}(X)$ of $G$-invariant Borel probability  measures on $X$ is also metrizable and set of $G$-ergodic measures $\mathrm{Erg}(X)$ is a Borel subset of $\mathfrak{M}(X)$. Furthermore, there is a unique Borel probability measure $\mathfrak{m}$ on $\mathfrak{M}(X)$ such that for any $f\in C(X, \mathbb{C})$, 
\[\mu(f)=\int_{\mathrm{Erg}(X)} \nu(f) \mathrm{d}\mathfrak{m}(\nu).\]
\end{theorem}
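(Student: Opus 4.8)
The plan is to deduce the statement from Choquet's theorem on metrizable compact convex sets, following \cite[Ch.~12]{Choquet}. The first step is to set up the geometry. Let $P(X)$ be the space of Borel probability measures on $X$ carrying the weak$^*$ topology inherited from $C(X,\mathbb{R})^*$. Since $X$ is compact metrizable, $C(X,\mathbb{R})$ is separable, so by Banach--Alaoglu together with the Riesz representation theorem $P(X)$ is compact and metrizable, and it is plainly convex. By continuity of the action, $f\circ\tau\in C(X,\mathbb{R})$ for every $\tau\in G$ and $f\in C(X,\mathbb{R})$, and the set $\mathfrak{M}(X)$ is cut out inside $P(X)$ by the weak$^*$-closed conditions $\nu(f\circ\tau)=\nu(f)$. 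Hence $\mathfrak{M}(X)$ is a compact metrizable convex subset of $P(X)$; in particular it is metrizable, as claimed.

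The second step is to identify $\mathrm{Erg}(X)$ with the set of extreme points of $\mathfrak{M}(X)$. If $\nu\in\mathfrak{M}(X)$ is not ergodic, choose $A\in\mathcal{B}(X)$ with $\nu(A\bigtriangleup\tau\cdot A)=0$ for all $\tau\in G$ and $0<\nu(A)<1$; a short computation using invariance of $\nu$ shows that the conditional measures $\nu(\,\cdot\mid A)$ and $\nu(\,\cdot\mid X\setminus A)$ lie in $\mathfrak{M}(X)$, and $\nu=\nu(A)\,\nu(\,\cdot\mid A)+(1-\nu(A))\,\nu(\,\cdot\mid X\setminus A)$ is then a nontrivial convex combination of distinct invariant measures, so $\nu$ is not extreme. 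Conversely, if $\nu=t\nu_1+(1-t)\nu_2$ with $t\in(0,1)$ and $\nu_1\neq\nu_2$ in $\mathfrak{M}(X)$, then $\nu_1\ll\nu$ and, since both $\nu$ and $\nu_1$ are $G$-invariant, the Radon--Nikodym derivative $\mathrm{d}\nu_1/\mathrm{d}\nu$ is a $G$-invariant function that is not a.e.\ constant, so $\nu$ is not ergodic by Proposition \ref{erginv}. Finally, the set of extreme points of a metrizable compact convex set is always a $G_\delta$, so $\mathrm{Erg}(X)=\mathrm{ex}(\mathfrak{M}(X))$ is Borel in $\mathfrak{M}(X)$.

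The third step is the representation itself. By the Choquet existence theorem, every $\mu\in\mathfrak{M}(X)$ is the barycenter of a Borel probability measure $\mathfrak{m}$ on $\mathfrak{M}(X)$ carried by $\mathrm{ex}(\mathfrak{M}(X))=\mathrm{Erg}(X)$. Evaluating the barycenter identity against the continuous affine functionals $\nu\mapsto\nu(f)$ for $f\in C(X,\mathbb{R})$, and then extending to $f\in C(X,\mathbb{C})$ by splitting into real and imaginary parts, gives exactly $\mu(f)=\int_{\mathrm{Erg}(X)}\nu(f)\,\mathrm{d}\mathfrak{m}(\nu)$.

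The last and most substantial step is uniqueness of $\mathfrak{m}$, which amounts to showing that $\mathfrak{M}(X)$ is a Choquet simplex. The standard argument is that the space of finite signed $G$-invariant Borel measures on $X$ is a vector lattice: the space of all finite signed Borel measures is a lattice via the Hahn--Jordan decomposition, and uniqueness of the Jordan decomposition forces the positive and negative parts of a $G$-invariant signed measure to be $G$-invariant, so the invariant measures form a sublattice; by the Choquet--Meyer uniqueness theorem the representing measure on the extreme boundary is then unique. I expect this simplex property, rather than the existence half, to be the real crux; since it is precisely what is worked out in \cite[Ch.~12]{Choquet} at the level of generality we need, one may also simply cite it there.
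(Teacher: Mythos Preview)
Your proposal is correct and follows exactly the approach the paper intends: the paper does not give its own proof but simply cites \cite[p.~77]{Choquet}, and your sketch is a faithful unpacking of the Choquet-theoretic argument found there. If anything, you have supplied more detail than the paper does, so there is nothing to add.
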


\subsection{Keisler measures}

Keisler measures are finitely additive probability measures on the space of definable subsets of a structure. Chapter 7 of Pierre Simon's book \cite{NIP} is a good introduction to the subject. We give a brief self-contained discussion and note the importance of ergodic measures in the study of an invariant Keisler measures.\\

Let $T$ be a complete $\mathcal{L}$-theory, where $\mathcal{L}$ is a countable first-order language and $\mathcal{M}\models T$. Let $\mathrm{Def}_x(M)$ denote the Boolean algebra of $\mathcal{L}(M)$-formulas in the free variable $x$ up to $\mathrm{Th}(\mathcal{M}_M)$-equivalence, where $\mathcal{M}_M$ is the expansion of $\mathcal{M}$ by constant symbols for each element of $M$. Let $S_x(M)$ be the Stone space of types over $M$ in the variable $x$ with the usual topology. We write $[\phi(x,a)]$ for the clopen set of types containing the formula $\phi(x,a)$.

This space is always compact and Hausdorff. Moreover, when $M$ is countable, it is also metrizable. This can be seen by taking an enumeration of the $\mathcal{L}(M)$-formulas $\phi_1, \phi_2, \dots$ and then for $p_1, p_2\in S_x(M),$ letting $d(p_1,p_2)=\frac{1}{2^n}$, where $n$ is the smallest natural number such that $\phi_n$ is not contained in both $p_1$ and $p_2$. The automorphism group $\mathrm{Aut}(M)$ naturally acts on $\mathrm{Def}_x(M)$ and on $S_x(M)$, where for $\tau\in \mathrm{Aut}(M)$, $\tau\cdot \phi(x, a)=\phi(x, \tau(a))$ and for $p\in S_x(M)$, 
\[\tau\cdot p=\{\phi(x, \tau(a)) :  \phi(x, a)\in p\}.\]

\begin{definition} Let $\mathcal{M}$ be an $\mathcal{L}$-structure. We say that $\mu:\mathrm{Def}_x(M)\to [0,1]$ is a \textbf{Keisler measure} if it is a finitely additive measure such that $\mu(x=x)=1$. We say that $\mu$ is $\mathrm{Aut}(M)$-\textbf{invariant} if for any $\tau\in \mathrm{Aut}(M)$,
\[\mu(\tau\cdot \phi(x, a))=\mu(\phi(x, a)).\]
If $\mathcal{M}$ is strongly $\omega$-homogeneous, this is equivalent to saying that, if $a\equiv a'$, then
$\mu(\phi(x, a))=\mu(\phi(x, a'))$.
\end{definition}

As noted in the introduction, Keisler measures give us a natural notion of smallness for a definable set.

\begin{definition} Let $\mathbb{M}$ be a monster model and $A\subset \mathbb{M}$ a small subset. We say that $\phi(x,a)\in\mathrm{Def}_x(\mathbb{M})$ is \textbf{universally measure zero} over $A$ if $\mu(\phi(x,a))=0$ for any $\mathrm{Aut}(\mathbb{M}/A)$-invariant Keisler measure. We call $\mathcal{O}(A)$ the set of formulas which are universally measure zero over $A$. If $A=\emptyset$, we say that $\phi(x,a)$ is universally measure zero.
\end{definition}

For $A\subseteq \mathbb{M}$ small, $\mathcal{O}_x(A)=\mathcal{O}(A)\cap \mathrm{Def}_x(\mathbb{M})$ forms an ideal  in the Boolean algebra of $\mathrm{Def}_x(\mathbb{M})$ \cite{lots_of_authors}. Similarly, $F_x(A)$, the set of formulas in the variable $x$ with parameters from $\mathbb{M}$ forking over $A$, also forms an ideal. Our main result is that there are $\omega$\Hyphdash* categorical supersimple theories for which $\mathcal{O}(\emptyset)$ strictly contains the set of formulas forking over $\emptyset$, $F(\emptyset)$. 

\begin{remark}\label{noloss} In this article we mainly study $\mathrm{Aut}(M)$-invariant measures on a countable $\omega$\Hyphdash* categorical structure $\mathcal{M}$. Any $\mathrm{Aut}(\mathbb{M})$-invariant measure on $\mathbb{M}$ induces an $\mathrm{Aut}(M)$-invariant measure on any small model $\mathcal{M}\prec\mathbb{M}$. Hence, if for $a\in M$, $\mu(\phi(x,a))=0$ for any $\mathrm{Aut}(M)$-invariant Keisler measure on $M$, then, $\phi(x,a)$ is universally measure zero. In particular, for a small theory $T$, we have a countable $\omega$-saturated model $\mathcal{N}$, and so any $\mathrm{Aut}(\mathbb{M})$-invariant Keisler measure $\mu$ on $\mathbb{M}$ is entirely determined by its restriction to formulas with parameters from $N$. By $\omega$-saturation, working in an $\omega$\Hyphdash* categorical theory, no generality is lost 
by looking at the $\mathrm{Aut}(M)$-invariant Keisler measures for the countable model $\mathcal{M}$ instead of the $\mathrm{Aut}(\mathbb{M})$-invariant Keisler measures for the monster model $\mathbb{M}$.
\end{remark}

%Let $\mathcal{N}$ be $\aleph_0$-homogeneous. If $\phi(x,a)$ is universally measure zero in any elementary sustructure $\mathcal{M}\preceq\mathcal{N}$, then it is universally measure zero in $\mathcal{N}$.

A Keisler measure $\mu$ can be extended uniquely to a regular $\sigma$-additive Borel probability measure $\mu: \mathcal{B}_x(M)\to[0,1]$, where $\mathcal{B}_x(M)$ is the set of Borel subsets of $S_x(M)$ \cite{NIP}[$\S 7.1$]. Conversely, any regular Borel probability measure on $S_x(M)$ induces a Keisler measure by considering its restriction to clopen sets. This correspondence still holds between $\mathrm{Aut}(M)$-invariant Keisler measures in the variable $x$ and $\mathrm{Aut}(M)$-invariant regular Borel probability measures on $S_x(M)$. From here, we shall speak interchangeably of the two. 

\begin{remark}
An $\mathrm{Aut}(M^{eq})$-invariant Keisler measure $\mu^{eq}$ on $\mathcal{M}^{eq}$ in the real variable $x$ is entirely determined by its restriction to its induced $\mathrm{Aut}(M)$-invariant Keisler measure $\mu$ on $\mathcal{M}$. Furthermore, $\mu^{eq}$ is ergodic if and only if $\mu$ is.
\end{remark}

If we are interested in studying the universally measure zero formulas for a structure, it is helpful to study $\mathrm{Aut}(M)$-ergodic measures on $S_x(M)$. From the ergodic decomposition \ref{ergdecomp}, we get:

\begin{corollary}\label{decomp} Let $\mathcal{M}$ be a countable structure and let $\mu$ be an $\mathrm{Aut}(M)$-invariant Borel probability measure on $S_x(M)$. Let $\mathfrak{M}_x(M)$ be the space of $\mathrm{Aut}(M)$-invariant Borel probability measures on $S_x(M)$. Then, there is a unique Borel probability measure $\mathfrak{m}$ on $\mathfrak{M}_x(M)$ such that for any $\mathcal{L}(M)$-formula $\phi(x,a)$,
\[\mu([\phi(x,a)])=\int_{\mathrm{Erg}_x(M)} \nu([\phi(x,a)]) \mathrm{d}\mathfrak{m}(\nu),\]
where $\mathrm{Erg}_x(M)$ is the space of $\mathrm{Aut}(M)$-ergodic measures.

Hence, if there is countable structure $\mathcal{M}\prec\mathbb{M}$ containing $a$ such that $\phi(x,a)$ has measure zero for any ergodic measure on $S_x(M)$, then $\phi(x,a)$ is universally measure zero.
\end{corollary}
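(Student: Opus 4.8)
The plan is to obtain both assertions as a direct specialisation of the abstract ergodic decomposition (Theorem~\ref{ergdecomp}) to the flow $X = S_x(M)$ with $G = \mathrm{Aut}(M)$. First I would verify the hypotheses of Theorem~\ref{ergdecomp}. Since $M$ is countable, $S_x(M)$ is compact, Hausdorff and metrizable, as already observed via the explicit metric $d(p_1,p_2) = 2^{-n}$. The action of $\mathrm{Aut}(M)$ on $S_x(M)$ given by $\tau \cdot p = \{\phi(x,\tau(a)) : \phi(x,a) \in p\}$ is by homeomorphisms, since each $\tau$ permutes the basic clopen sets $[\phi(x,a)]$; and it is continuous as a map $\mathrm{Aut}(M) \times S_x(M) \to S_x(M)$ when $\mathrm{Aut}(M)$ carries the topology of pointwise convergence, because a basic clopen neighbourhood $[\phi(x,b)]$ of $\tau \cdot p$ involves only finitely many elements of $M$, so it contains $\sigma \cdot q$ for every $\sigma$ agreeing with $\tau$ on those elements and every $q$ in a suitable clopen neighbourhood of $p$.

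With the hypotheses in place, Theorem~\ref{ergdecomp} yields that the space $\mathfrak{M}_x(M)$ of $\mathrm{Aut}(M)$-invariant Borel probability measures is metrizable, that $\mathrm{Erg}_x(M)$ is a Borel subset of it, and that there is a unique Borel probability measure $\mathfrak{m}$ on $\mathfrak{M}_x(M)$ with $\mu(f) = \int_{\mathrm{Erg}_x(M)} \nu(f)\, \mathrm{d}\mathfrak{m}(\nu)$ for all $f \in C(S_x(M), \mathbb{C})$. To deduce the stated identity I would apply this to $f = \mathbf{1}_{[\phi(x,a)]}$, which is continuous precisely because $[\phi(x,a)]$ is clopen; then $\mu(f) = \mu([\phi(x,a)])$ and $\nu(f) = \nu([\phi(x,a)])$, which is the claim. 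Uniqueness of $\mathfrak{m}$ is inherited verbatim from Theorem~\ref{ergdecomp}; alternatively, since the clopen sets generate the Borel $\sigma$-algebra on $S_x(M)$, the identity restricted to indicators of clopen sets already determines $\mathfrak{m}$.

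For the final ``hence'', suppose $\mathcal{M} \prec \mathbb{M}$ is countable, $a \in M$, and $\nu([\phi(x,a)]) = 0$ for every $\nu \in \mathrm{Erg}_x(M)$. Then for every $\mathrm{Aut}(M)$-invariant regular Borel probability measure $\mu$ on $S_x(M)$ the decomposition gives $\mu([\phi(x,a)]) = \int_{\mathrm{Erg}_x(M)} 0\, \mathrm{d}\mathfrak{m}(\nu) = 0$. By Remark~\ref{noloss}, any $\mathrm{Aut}(\mathbb{M})$-invariant Keisler measure restricts to an $\mathrm{Aut}(M)$-invariant Keisler measure on $\mathcal{M}$, and hence (after passing to the associated regular Borel measure on $S_x(M)$) assigns $\phi(x,a)$ measure zero; so $\phi(x,a)$ is universally measure zero.

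The only point that is not pure bookkeeping is the continuity of the $\mathrm{Aut}(M)$-action on $S_x(M)$, which is what licenses the appeal to Theorem~\ref{ergdecomp} and is the one step I would spell out with care; everything else is a routine translation between continuous functions on the type space and definable subsets of $\mathcal{M}$.
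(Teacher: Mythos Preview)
Your proposal is correct and is exactly the argument the paper intends: the corollary is stated immediately after Theorem~\ref{ergdecomp} with the sentence ``From the ergodic decomposition \ref{ergdecomp}, we get:'' and no further proof, so the intended argument is precisely the specialisation to $X=S_x(M)$, $G=\mathrm{Aut}(M)$, together with Remark~\ref{noloss} for the final clause. Your attention to the continuity of the action and to the fact that $\mathbf{1}_{[\phi(x,a)]}$ is continuous is more detail than the paper gives, but it fills in exactly the right bookkeeping.
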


\begin{remark} When $M$ is countable, every Borel measure $\mu$ on $S_x(M)$ is regular \cite[Theorem 7.1.7]{Boga}. From \cite[Corollary 1.2]{EvansTsk}, we actually know that when $\mathcal{M}$ is the countable model of an $\omega$-categorical theory,  $\mathrm{Erg}_x(M)$ is closed in the space $\mathfrak{M}_x(M)$.
\end{remark}

%\begin{remark} For the purposes of our proof, it is sufficient that if $\phi(x,a)$ has measure zero for any $\nu\in\mathrm{Erg}_x(M)$, then $\mu(\phi(x,a))=0$ for any $\mu\in \mathfrak{M}_x(M)$. This can be proven for an arbitrary structure $\mathcal{M}$ using the Krein-Milman Theorem (see, for example, \cite[Theorem 7.4]{Conway}). To see this, we consider $\mathfrak{M}_x(M)$ as a compact convex subset of $C^*(S_x(M))$, the dual of the set of continuous functions from $S_x(M)$ to $\mathbb{R}$ with the weak$^*$-topology (see \cite[Remark 2.2 \& Fact 2.37]{ArtGanII}). In doing this, we identify the measure $\mu\in \mathfrak{M}_x(M)$ with the linear functional $I_\mu:C(S_x(M))\to \mathbb{R}$ given by 
%\[I_\mu(f)=\int_{S_x(M)} f \ \mathrm{d}\mu.\]
%By definition of the weak$^*$-topology, the evaluation map $\Phi_{\phi(x,a)}: C(S_x(M))^*\to \mathbb{R}$ given by 
%\[\Phi_{\phi(x,a)}(F)=F(\mathbb{1}_{\phi(x,a)}),\]
%is continuous, where $\mathbb{1}_{\phi(x,a)}$ is the indicator function for $\phi(x,a)$ in $C(S_x(M))$. Hence, 
%the space $\mathfrak{M}_x(M)^0$ of $\mathrm{Aut}(M)$-invariant regular Borel probability measures on $S_x(M)$ which assign measure zero to $\phi(x,a)$ is given by
%\[(\Phi_{\phi(x,a)}|_{\mathfrak{M}_x(M)})^{-1}(\{0\}),\]
%and so is closed and convex in $\mathfrak{M}_x(M)$. The set of extreme points of $\mathfrak{M}_x(M)$ is $\mathrm{Erg}_x(M)$ \cite[Proposition 12.4]{Choquet}. So, if $\mathrm{Erg}_x(M)\subseteq \mathfrak{M}_x(M)^0\subseteq \mathfrak{M}_x(M)$, by Krein-Milman we have that $\mathfrak{M}_x(M)^0=\mathfrak{M}_x(M)$.
%\end{remark}

For conciseness of notation, in subsequent sections we shall generally refer to $\mu$ as an ergodic measure on $\mathcal{M}^{eq}$ (in the variable $x$). By this we mean that $\mu$ is a Borel probability measure on $S_x(M^{eq})$ invariant under the action of $\mathrm{Aut}(M^{eq})$, which is also $\mathrm{Aut}(M^{eq})$-ergodic. As noted above, when $x$ is a variable in the real sort, there is a one-to-one correspondence between these ergodic measures and the ergodic $\mathrm{Aut}(M)$-invariant Borel probability measures on $S_x(M)$. More generally, given the various correspondences explained in this section we can safely use $\mu$ to denote both the Keisler measure and the corresponding Borel probability measures on $S_x(M)$ and $S_x(M^{eq})$.

\section{Weak algebraic independence and probabilistic independence}\label{WAI}

Given the Borel probability space $(S_x(M^{eq}), \mathcal{B}_x(M^{eq}), \mu)$ induced by an invariant Keisler measure on $M$ in the variable $x$, we wish to study the associated Hilbert space $L^2(\mu)$. In this section we introduce some of the relevant tools for this, following the discussion of \cite{JahelT} and \cite{unitsk}. Furthermore, we shall show how various results of \cite{JahelT} yield that a weak form of algebraic independence between parameters implies a form of probabilistic independence in an ergodic measure. This result is Corollary \ref{probind}.\\

Let $\mathcal{M}$ be a countable $\omega$\Hyphdash* categorical structure. We have that $\mathrm{Aut}(M)$ is a Polish group, that is, a 
 separable completely metrizable topological group. In particular, in the topology of $G=\mathrm{Aut}(M)$, the pointwise stabilizers of finite sets $G_A=\mathrm{Aut}(M/A)$ for $A\subset M$ finite, are neighbourhoods of the identity and the set of cosets of these pointwise stabilizers form a basis of open sets for the topology \cite{Homogeneous}[\S 4.1].

\begin{definition} Let $\mathcal{H}$ be a complex Hilbert space and $U(\mathcal{H})$ be its unitary group. A \textbf{unitary representation} of a topological group $G$ is a continuous action of $G$ on $\mathcal{H}$ by unitary operators. Equivalently, we may say that it is a
an homomorphism $\pi: G\to U(\mathcal{H})$ such that for each $f\in\mathcal{H}$ the map $\tau\mapsto \pi(\tau)\cdot f$ is continuous.
\end{definition}

As noted in the previous section, a Keisler measure $\mu$ on $\mathcal{M}$ in the variable $x$ induces a Borel probability space $(S_x(M^{eq}), \mathcal{B}_x(M^{eq}), \mu)$. We are interested in studying the complex $L^2$-space  $L^2(S_x(M^{eq}), \mathcal{B}_x(M^{eq}), \mu)$, which we abbreviate $L^2(\mu)$. An element $f\in L^2(\mu)$ stands for an equivalence class of complex measurable functions agreeing almost everywhere. Furthermore, since $L^2(\mu)$ is a Hilbert space, it is equipped with an inner product:
\[\text{for } f, g\in L^2(\mu), \langle f, g\rangle:=\int_{S_x(M^{eq})} f\cdot \overline{g} \hspace{0.3em} \mathrm{d}\mu.\]

The action of $\mathrm{Aut}(M)$ on $M$ naturally induces an action $\lambda: \mathrm{Aut}(M)\times L^2(\mu)\to L^2(\mu)$, where for $\sigma\in \mathrm{Aut}(M), f\in L^2(\mu)$, 
\[\lambda(\sigma, f)(p)=f(\sigma^{-1}(p)) \text { for all } p\in S_x(M).\]
This action is well defined and preserves integrals. For each $\sigma\in \mathrm{Aut}(M)$, the map $\Lambda_\sigma:L^2(\mu)\to L^2(\mu)$ given by $f\mapsto \lambda(\sigma, f)$ is a unitary operator. Moreover, the map $\pi: \mathrm{Aut}(M) \to U(L^2(\mu))$ from the group of automorphisms of $M$ to the group of unitary operators of $L^2(\mu)$ given by $\sigma\mapsto \Lambda_\sigma$ is a unitary representation (cf. $\S 3$ of \cite{unitsk}).\\

Let $\mathcal{H}$ be a complex Hilbert space and the action of $G=\mathrm{Aut}(M)$ on $\mathcal{H}$ be a \textbf{unitary representation}. For $A\subseteq M^{eq}$ we write, following \cite{JahelT},
\[\mathcal{H}_A=\overline{\{f\in \mathcal{H} | G_{A'}\cdot f=f \text{ for some finite } A'\subseteq A\}},\]
where for $S\subset\mathcal{H}$, $\overline{S}$ is its closure.

\begin{definition} Let $\mathcal{H}$ be a Hilbert space and $\mathcal{K}\subseteq \mathcal{H}$ be a subspace. We denote the orthogonal complement of $\mathcal{K}$ in $\mathcal{H}$ (often denoted as $\mathcal{K}^\perp$) as $\mathcal{H}\ominus \mathcal{K}$. When $\mathcal{K}$ is a closed subspace, we have that $\mathcal{H}=\mathcal{K}\oplus (\mathcal{H}\ominus\mathcal{K})$. Given subspaces $\mathcal{H}_0, \mathcal{H}_1, \mathcal{H}_2\subseteq \mathcal{H}$, and $\mathcal{H}_0\subseteq\mathcal{H}_i$ for $i\in\{1,2\}$ we say that $\mathcal{H}_1$ and $\mathcal{H}_2$ are \textbf{orthogonal} over $\mathcal{H}_0$ and write $\mathcal{H}_1\perp_{\mathcal{H}_0}\mathcal{H}_2$ if $(\mathcal{H}_1\ominus\mathcal{H}_0)\perp(\mathcal{H}_2\ominus\mathcal{H}_0)$.
\end{definition}
Note that when the $\mathcal{H}_i$ are all closed subspaces for $i\in\{1, 2, 3\}$ with $\mathcal{H}_1\perp_{\mathcal{H}_0}\mathcal{H}_2$, we have that for $f\in \mathcal{H}_1$ we can decompose $f$ into $f=f_1+f_0$, where $f'\in \mathcal{H}_1\ominus\mathcal{H}_0$ and $f_0\in \mathcal{H}_0$. Similarly, for $g\in \mathcal{H}_2$ we can decompose it as $g=g_2+g_0$. Now, by the orthogonalities $(\mathcal{H}_i\ominus\mathcal{H}_0)\perp \mathcal{H}_0$ for $i\in\{1,2\}$ and $(\mathcal{H}_1\ominus\mathcal{H}_0)\perp(\mathcal{H}_2\ominus\mathcal{H}_0)$, we have that
\[\langle f, g \rangle=\langle f_0, g_0 \rangle,\]
where $\langle \cdot , \cdot \rangle$ is the inner product on $\mathcal{H}$.

In an $\omega$\Hyphdash* categorical structure, we have the following theorem of Jahel and Tsankov, which translates weak algebraic independence into orthogonality of the associated Hilbert spaces:

\begin{theorem}\label{deeptheorem} \cite[Theorem 3.2]{JahelT} Let $M$ be $\omega$\Hyphdash* categorical and $G=\mathrm{Aut}(M)$. Let $A, B\subseteq M^{eq}$ be algebraically closed with respect to $\mathrm{acl}^{eq}$. Then, $\mathcal{H}_A\perp_{\mathcal{H}_{A\cap B}}\mathcal{H}_B$.
\end{theorem}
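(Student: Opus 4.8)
The plan is to reformulate the conclusion as an identity of orthogonal projections and then to cut the problem down to a concrete list of representations. Write $P_A,P_B,P_{A\cap B}$ for the orthogonal projections of $\mathcal{H}$ onto $\mathcal{H}_A,\mathcal{H}_B,\mathcal{H}_{A\cap B}$. Since every finite subset of $A\cap B$ is in particular a finite subset of $A$ and of $B$, we have $\mathcal{H}_{A\cap B}\subseteq\mathcal{H}_A\cap\mathcal{H}_B$, so $P_{A\cap B}$ is absorbed on either side by $P_A$ and by $P_B$; expanding $(P_A-P_{A\cap B})(P_B-P_{A\cap B})$ then shows that $\mathcal{H}_A\perp_{\mathcal{H}_{A\cap B}}\mathcal{H}_B$ is equivalent to the single operator identity $P_AP_B=P_{A\cap B}$ (the symmetric identity then follows by taking adjoints). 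So what must be proved is that, in an arbitrary unitary representation of $G=\mathrm{Aut}(M)$, the ``$A$-part'' and the ``$B$-part'' form a commuting square over the ``$(A\cap B)$-part''.

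First I would reduce to $\mathcal{H}$ irreducible. Fixed-point subspaces are computed coordinatewise, so a decomposition $\mathcal{H}=\bigoplus_i\mathcal{H}^{(i)}$ of unitary $G$-representations gives $\mathcal{H}_A=\bigoplus_i(\mathcal{H}^{(i)})_A$ and likewise for $B$ and $A\cap B$; hence $P_A=\bigoplus_iP_A^{(i)}$, and it is enough to prove $P_A^{(i)}P_B^{(i)}=P_{A\cap B}^{(i)}$ for each $i$. Since $G=\mathrm{Aut}(M)$ is oligomorphic, by Tsankov's structure theory every unitary representation is a direct sum of irreducibles, so we may assume $\mathcal{H}$ irreducible; in this case $\mathcal{H}^{G_{\bar c}}$ is finite-dimensional for every finite tuple $\bar c$, its dimension being the multiplicity of $\mathcal{H}$ in $\ell^2$ of the orbit $G\cdot\bar c$, which is finite because $G$ has only finitely many orbits on $(G\cdot\bar c)\times(G\cdot\bar c)$.

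The heart of the proof is then the classification of the irreducible unitary representations of an oligomorphic group: each is unitarily equivalent to a representation induced from a finite-dimensional representation of an open subgroup of $G$ attached to the algebraic closure $\mathrm{acl}^{eq}(\bar c)$ of a finite tuple $\bar c$. Realizing $\mathcal{H}$ in this concrete form equips it with a basis indexed by the relevant orbit of such ``parameters'' (twisted by the finite-dimensional datum), from which one reads off explicit descriptions of $\mathcal{H}_A$, $\mathcal{H}_B$ and $\mathcal{H}_{A\cap B}$ -- roughly, $\mathcal{H}_A$ is the closed span of the $\mathcal{H}^{G_{\bar a}}$-projections of these basis vectors over finite $\bar a\subseteq A$. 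The identity $P_AP_B=P_{A\cap B}$ then becomes bookkeeping whose only real input is the following model-theoretic fact, using $\omega$-categoricity of $M$ and $\mathrm{acl}^{eq}$-closedness of $A,B$: for finite $\bar a\subseteq A$ and $\bar b\subseteq B$ one has $\mathrm{acl}^{eq}(\bar a)\cap\mathrm{acl}^{eq}(\bar b)\subseteq A\cap B$, and the matrix coefficient $g\mapsto\langle\pi(g)\xi_{\bar a},\xi_{\bar b}\rangle$ of the corresponding basis vectors depends only on this common algebraically closed part; hence, after subtracting the $\mathcal{H}_{A\cap B}$-components, the $A$-family and the $B$-family are genuinely orthogonal.

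I expect the main obstacle to be precisely this middle step -- the classification and concrete realization of the irreducible unitary representations of oligomorphic groups -- which is the deep ingredient drawn from Tsankov's work and is what makes the final computation tractable. A softer-looking alternative (embed $\mathcal{H}$ into the Koopman representation of a measure-preserving $G$-action, identify $\mathcal{H}_A$ with the functions measurable over the ``$A$-part'' and $P_A$ with the corresponding conditional expectation, and deduce $P_AP_B=P_{A\cap B}$ from conditional independence of the tail $\sigma$-algebras attached to $\mathrm{acl}^{eq}$-closed sets) I would expect to run into the same essential difficulty, since verifying that conditional independence again rests on the rigidity of oligomorphic group actions.
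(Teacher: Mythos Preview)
The paper does not prove this statement at all: it is quoted verbatim as \cite[Theorem 3.2]{JahelT} and used as a black box, so there is no ``paper's own proof'' to compare against. What you have sketched is, in outline, the Jahel--Tsankov argument itself: reformulate orthogonality over $\mathcal{H}_{A\cap B}$ as the projection identity $P_AP_B=P_{A\cap B}$, reduce to irreducible subrepresentations via Tsankov's theorem that every unitary representation of a Roelcke-precompact (in particular oligomorphic) Polish group is a Hilbert sum of irreducibles, and then use Tsankov's classification of the irreducibles as induced from finite-dimensional irreducible representations of open subgroups $G_{\widehat{C}}$ (setwise stabilisers of $\mathrm{acl}^{eq}$-closed finite sets) to compute the fixed spaces explicitly.

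Your sketch is accurate as a roadmap, but the step you label ``bookkeeping'' is where the actual work lies in \cite{JahelT}, and your description of it is not quite right. In the induced model $\mathcal{H}\cong\mathrm{Ind}_{G_{\widehat{C}}}^{G}\sigma$ one identifies $\mathcal{H}^{G_{\bar a}}$ with a space of $\sigma$-valued functions supported on finitely many double cosets, and the orthogonality computation reduces not to a statement about matrix coefficients $\langle\pi(g)\xi_{\bar a},\xi_{\bar b}\rangle$ depending only on $\mathrm{acl}^{eq}(\bar a)\cap\mathrm{acl}^{eq}(\bar b)$, but to showing that for a $G_{\bar a}$-fixed vector and a $G_{\bar b}$-fixed vector their inner product already agrees with the inner product of their $G_{\bar c}$-projections for suitable finite $\bar c\subseteq A\cap B$. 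The model-theoretic input is sharper than what you wrote: one needs that for $\bar a\subseteq A$, $\bar b\subseteq B$ and any realisation $C'$ of the induced-from parameter, $\mathrm{acl}^{eq}(\bar a C')\cap\mathrm{acl}^{eq}(\bar b C')$ is already contained in $\mathrm{acl}^{eq}(\bar c C')$ for some finite $\bar c\subseteq A\cap B$, together with a counting/averaging argument over the relevant double cosets. Your alternative conditional-expectation heuristic is indeed circular in the way you suspect.
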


\begin{remark} Recently, \cite{PieceInt} develops some results of this kind outside the context of $\omega$\Hyphdash* categorical structures.
\end{remark}

Note that the space of constant functions is closed in $L^2(\mu)$, hence, from Proposition \ref{erginv} we get: 
\begin{corollary}\label{invfun} Let $\mu$ be an ergodic measure on $\mathcal{M}^{eq}$ in the variable $x$. Then, $L^2(\mu)_{\mathrm{dcl}^{eq}(\emptyset)}$ is generated by the constant indicator function $\mathbb{1}$.
\end{corollary}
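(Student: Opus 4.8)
The statement to prove is Corollary~\ref{invfun}: for an ergodic measure $\mu$ on $\mathcal{M}^{eq}$ in the variable $x$, the subspace $L^2(\mu)_{\mathrm{dcl}^{eq}(\emptyset)}$ is exactly the one-dimensional subspace spanned by the constant indicator function $\mathbb{1}$. The plan is to identify this subspace with the space of $\mathrm{Aut}(M)$-invariant $L^2$-functions and then apply the ergodicity criterion of Proposition~\ref{erginv}. First I would unwind the definition of $\mathcal{H}_A$ for $A = \mathrm{dcl}^{eq}(\emptyset)$: since the only finite subsets $A' \subseteq \mathrm{dcl}^{eq}(\emptyset)$ have $G_{A'} = G_{\mathrm{dcl}^{eq}(\emptyset)} = \mathrm{Aut}(M)$ (as every element of $\mathrm{dcl}^{eq}(\emptyset)$ is already fixed setwise-pointwise by all of $\mathrm{Aut}(M)$), the set $\{f \in L^2(\mu) \mid G_{A'} \cdot f = f \text{ for some finite } A' \subseteq \mathrm{dcl}^{eq}(\emptyset)\}$ is precisely the set of $f$ with $\Lambda_\sigma f = f$ for all $\sigma \in \mathrm{Aut}(M)$, i.e. the genuinely $\mathrm{Aut}(M)$-invariant elements of $L^2(\mu)$.

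Next I would observe that this set of invariant elements is already closed, so taking the closure in the definition of $\mathcal{H}_A$ does nothing: if $f_n \to f$ in $L^2(\mu)$ with each $f_n$ invariant, then for each $\sigma$ the operator $\Lambda_\sigma$ is unitary hence continuous, so $\Lambda_\sigma f = \lim \Lambda_\sigma f_n = \lim f_n = f$. (Alternatively, as the paper already notes before the corollary, the space of constant functions is closed; but here I want closedness of the larger invariant subspace, which follows from continuity of each $\Lambda_\sigma$.) Therefore $L^2(\mu)_{\mathrm{dcl}^{eq}(\emptyset)} = \{f \in L^2(\mu) : \Lambda_\sigma f = f \text{ for all } \sigma \in \mathrm{Aut}(M)\}$.

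Finally I would invoke ergodicity. An element $f \in L^2(\mu)$ with $\Lambda_\sigma f = f$ in $L^2(\mu)$ for every $\sigma$ has, for each fixed $\sigma$, a representative function satisfying $f \circ \sigma^{-1} = f$ $\mu$-almost everywhere, so $f$ is $\mathrm{Aut}(M)$-invariant almost everywhere in the sense of Proposition~\ref{erginv}(2). Since $\mu$ is ergodic, Proposition~\ref{erginv} gives that $f$ is constant almost everywhere, i.e. $f$ is a scalar multiple of $\mathbb{1}$ in $L^2(\mu)$. Conversely $\mathbb{1}$ is plainly invariant and lies in $L^2(\mu)$ since $\mu$ is a probability measure, so the span of $\mathbb{1}$ is contained in $L^2(\mu)_{\mathrm{dcl}^{eq}(\emptyset)}$. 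This establishes the claimed equality.

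The only mildly delicate point — and the one I would be careful to state cleanly — is the quantifier order when passing between "$\Lambda_\sigma f = f$ in $L^2$ for all $\sigma$" and "$f$ is a.e. invariant under the action": for each individual $\sigma$ one gets a null set off which $f \circ \sigma^{-1} = f$, and Proposition~\ref{erginv} is stated precisely to accept this "for each $\tau$, a.e." hypothesis, so no uniform choice of null set is needed. Everything else is bookkeeping with the definitions of $\mathcal{H}_A$ and of the unitary representation $\pi$, so I do not expect a genuine obstacle here.
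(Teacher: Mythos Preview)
Your proof is correct and follows essentially the same route as the paper: both observe that $G_{A'}=G_\emptyset=\mathrm{Aut}(M)$ for every finite $A'\subseteq\mathrm{dcl}^{eq}(\emptyset)$, then use ergodicity via Proposition~\ref{erginv} to conclude that the invariant functions are constant. The only cosmetic difference is that you close up by noting the $\mathrm{Aut}(M)$-invariant subspace is closed (by continuity of each $\Lambda_\sigma$), whereas the paper phrases it as the pre-closure set already being the constants and the constants being closed; your handling of the closure step and the quantifier order in applying Proposition~\ref{erginv} is in fact slightly more careful than the paper's.
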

\begin{proof} Suppose $f\in L^2(\mu)_{\mathrm{dcl}^{eq}(\emptyset)}$. Then, for some finite $C\subseteq \mathrm{dcl}^{eq}(\emptyset)$, $f$ is $G_C$-invariant. However, since any element in $\mathrm{dcl}^{eq}(\emptyset)$ is fixed by $\mathrm{Aut}(M^{eq})$-automorphisms, $G_C=G_\emptyset$. This means that $f$ is invariant almost everywhere, and therefore constant by ergodicity of the measure.
\end{proof}

An ergodic meausure also concentrates on an orbit. So, for ergodic $\mu$, in $L^2(\mu)$ the constant function $\mathbb{1}$ will be in the same equivalence class as $\mathbb{1}_\phi$, where $\phi$ isolates one of the finitely many types over the empty set, by Ryll-Nardzewski.
%\begin{proof} Note that the space of constant functions is closed in $L^2(\mu)$. Hence, we only need to show the standard result that $\mu$ is ergodic if and only if any $f\in L^2(\mu)$ which is invariant (almost everywhere) is constant (almost everywhere). The right to left implication follows simply because for any $A\in\mathcal{B}_x(M)$, the indicator function $\mathbb{1}_A$ is in $L^2(\mu)$.\\
%Now, assume that $\mu$ is ergodic. Let $f\in L^2(\mu)$, and let $f_{re}:M^{|x|}\to \mathbb{R}$ and $f_{im}: M^{|x|}\to \mathbb{R}$ be its real and imaginary parts. By a standard result in measure theory we know $f:M^{|x|}\to \mathbb{C}$ is measurable if and only if $f_{re}$ and $f_{im}$ are. We show that $f_{re}$ is constant. For $t\in\mathbb{R}$, let $A_t:=\{x\in M | f_{re}(x)> t\}$. These sets are in $\mathcal{B}_x(M)$ by measurability of $f_{re}$. Furthermore, they are invariant by invariance of $f$ (and hence of $f_{re}$), and so have measure $0$ or $1$ by ergodicity. Now, let $r=\mathrm{inf}\{t\in\mathbb{R}| \mu(A_t)=0\}$. Note that $\mu(A_r)=0$ since $A_r=\bigcup_{n\in\mathbb{N}} A_{r+\frac{1}{n}}$ and $\mu$ is $\sigma$-additive. But for $t<r$, $\mu(A_t)=1$. Hence, we must have that
%\[\mu(\{x\in M^{|x|} | f_{re}(x)=r\})=1,\]
%and so $f_{re}$ is constant almost everywhere. The same reasoning applies to $f_{im}$, and so $f$ is constant almost everywhere.
%\end{proof}

\begin{definition}
Let $A, B, C \subseteq \mathcal{M}^{eq}$. Then, $A$ and $C$ are \textbf{weakly algebraically independent} over $B$, written $A\indep{B}{a} C$ if $\mathrm{acl}^{eq}(AB)\cap\mathrm{acl}^{eq}(BC)=\mathrm{acl}^{eq}(B)$.
\end{definition}

Theorem \ref{deeptheorem} yields very powerful results when considering an ergodic measure and the inner product on $L^2(\mu)$.

\begin{theorem}\label{productlemma} Let $\mathcal{M}$ be an $\omega$\Hyphdash* categorical countable structure with $\mathrm{acl}^{eq}(\emptyset)=\mathrm{dcl}^{eq}(\emptyset)$. Consider the complex Hilbert space $\mathcal{H}=L^2(\mu)$, where $\mu$ is an ergodic measure. Suppose that $A, B \subseteq M^{eq}$ are algebraically closed and that $A\indep{}{a} B$. Let $f\in \mathcal{H}_A, g\in\mathcal{H}_B$. Then, 
\[\langle f, g \rangle=\langle f, \mathbb{1}\rangle \overline{\langle g, \mathbb{1}\rangle}.\]
\end{theorem}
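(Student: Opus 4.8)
The plan is to combine Theorem~\ref{deeptheorem} with Corollary~\ref{invfun}, the rest being Pythagorean bookkeeping in $L^2(\mu)$. First I would unwind the hypothesis: since $A$ and $B$ are algebraically closed and $A\indep{}{a} B$, we get $A\cap B=\mathrm{acl}^{eq}(A)\cap\mathrm{acl}^{eq}(B)=\mathrm{acl}^{eq}(\emptyset)=\mathrm{dcl}^{eq}(\emptyset)$, using the standing assumption $\mathrm{acl}^{eq}(\emptyset)=\mathrm{dcl}^{eq}(\emptyset)$. Applying Theorem~\ref{deeptheorem} to $A,B$ then yields $\mathcal{H}_A\perp_{\mathcal{H}_{\mathrm{dcl}^{eq}(\emptyset)}}\mathcal{H}_B$, and by Corollary~\ref{invfun} the subspace $\mathcal{H}_{\mathrm{dcl}^{eq}(\emptyset)}=L^2(\mu)_{\mathrm{dcl}^{eq}(\emptyset)}$ is exactly the one-dimensional space $\mathbb{C}\mathbb{1}$ of $\mu$-a.e. constant functions, which in particular is closed.

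Next I would invoke the decomposition recorded just after the definition of orthogonality over a subspace. Set $\mathcal{H}_0=\mathbb{C}\mathbb{1}$; by monotonicity of $S\mapsto\mathcal{H}_S$ we have $\mathcal{H}_0=\mathcal{H}_{A\cap B}\subseteq\mathcal{H}_A$ and $\mathcal{H}_0\subseteq\mathcal{H}_B$, and $\mathcal{H}_A,\mathcal{H}_B$ are closed as they are defined to be closures. Hence for $f\in\mathcal{H}_A$ and $g\in\mathcal{H}_B$ we may write $f=f'+f_0$ and $g=g'+g_0$ with $f'\in\mathcal{H}_A\ominus\mathcal{H}_0$, $g'\in\mathcal{H}_B\ominus\mathcal{H}_0$ and $f_0,g_0\in\mathcal{H}_0$, and the orthogonalities give $\langle f,g\rangle=\langle f_0,g_0\rangle$.

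Finally I would identify the components $f_0,g_0$ as orthogonal projections onto $\mathbb{C}\mathbb{1}$. Since $\langle\mathbb{1},\mathbb{1}\rangle=\mu(S_x(M^{eq}))=1$, the projection of $f$ onto $\mathbb{C}\mathbb{1}$ is $\langle f,\mathbb{1}\rangle\mathbb{1}$, so $f_0=\langle f,\mathbb{1}\rangle\mathbb{1}$ and likewise $g_0=\langle g,\mathbb{1}\rangle\mathbb{1}$. Substituting,
\[
\langle f,g\rangle=\langle f_0,g_0\rangle=\langle f,\mathbb{1}\rangle\,\overline{\langle g,\mathbb{1}\rangle}\,\langle\mathbb{1},\mathbb{1}\rangle=\langle f,\mathbb{1}\rangle\,\overline{\langle g,\mathbb{1}\rangle},
\]
which is the claim.

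I do not expect a genuine obstacle: the whole weight rests on Theorem~\ref{deeptheorem}, which is already available, together with the observation that weak algebraic independence over $\emptyset$ collapses the ``base'' Hilbert space $\mathcal{H}_{A\cap B}$ to the scalars. The one point needing care is precisely this identification $\mathcal{H}_{A\cap B}=\mathbb{C}\mathbb{1}$: it is here that both the hypothesis $\mathrm{acl}^{eq}(\emptyset)=\mathrm{dcl}^{eq}(\emptyset)$ (to rule out $\mathcal{H}_{A\cap B}$ being larger than $\mathcal{H}_{\mathrm{dcl}^{eq}(\emptyset)}$) and the ergodicity of $\mu$ (via Corollary~\ref{invfun}, to force the invariant functions to be constant) are genuinely used.
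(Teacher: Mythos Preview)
Your argument is correct and follows essentially the same route as the paper's proof: apply Theorem~\ref{deeptheorem} to reduce $\langle f,g\rangle$ to the inner product of the projections onto $\mathcal{H}_{A\cap B}$, identify that subspace with $\mathbb{C}\mathbb{1}$ via Corollary~\ref{invfun} and the hypothesis $\mathrm{acl}^{eq}(\emptyset)=\mathrm{dcl}^{eq}(\emptyset)$, and then compute. If anything, you are slightly more explicit than the paper in spelling out why $A\cap B=\mathrm{dcl}^{eq}(\emptyset)$ and where each hypothesis enters.
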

\begin{proof}
Let $f_0$ and $g_0$ be the projections of $f$ and $g$ respectively on $\mathcal{H}_\emptyset$. Now, since $f\perp_{\mathcal{H}_\emptyset} g$, $\langle f, g \rangle=\langle f_0, g_0 \rangle$. Since $f_0, g_0\in\mathcal{H}_\emptyset$, by Lemma \ref{invfun}, $f_0=\alpha \mathbb{1}$, $g=\beta \mathbb{1}$ for $\alpha, \beta\in\mathbb{C}$. Now, this yields that 
\[\langle f, g\rangle=\langle f_0, g_0 \rangle=\langle \alpha \mathbb{1},\beta \mathbb{1} \rangle=\alpha \overline{\beta}\langle \mathbb{1}, \mathbb{1}\rangle.\]
Being in a probability space, $\langle \mathbb{1}, \mathbb{1}\rangle=1$. However, since $\langle f-f_0, f_0\rangle=0$, we obtain that $\langle f, \mathbb{1} \rangle =\alpha$, and similarly, $\langle g, \mathbb{1} \rangle =\beta$. This yields the desired result.
\end{proof}

This is already substantially observed in Corollary 3.5 and Remark 3.6 of \cite{JahelT}. In particular, since we are interested in the measures of formulas, we have that

%\begin{corollary}\label{formcor} Let $\mathcal{H}=L^2(\mu)$ where $\mu$ is ergodic. Suppose as above that $A, B \subseteq M^{eq}$ are algebraically closed with respect to $\mathrm{acl}^{eq}$ and that $\mathcal{H}_A\perp_{\mathcal{H}_{\mathrm{dcl}^{eq}(\emptyset)}}\mathcal{H}_B$. Then, for $\overline{a}, \overline{b}$ tuples from $A$ and $B$ respectively and $\phi(x, \overline{y}), \psi(x, \overline{z})$ $\mathcal{L}^{eq}$-formulas, 
%\[\mu(\phi(x, \overline{a}) \wedge \psi(x, \overline{b}))=\mu(\phi(x, \overline{a}) )\mu(\psi(x, \overline{b})).\]
%\end{corollary}

\begin{corollary}\label{probind} Let $\mathcal{M}$ be $\omega$\Hyphdash* categorical. Let $\mu$ be an ergodic measure on ${M}^{eq}$ in the variable $x$. Suppose that $\mathrm{acl}^{eq}(\emptyset)=\mathrm{dcl}^{eq}(\emptyset)$. Let $a, {b}$ be tuples from $\mathcal{M}^{eq}$ such that ${a}\indep{}{a}{b}$. Then, for any $\mathcal{L}^{eq}$-formulas $\phi(x, {y}), \psi(x, {z})$, 
\[\mu(\phi(x, {a}) \wedge \psi(x, {b}))=\mu(\phi(x, {a}) )\mu(\psi(x, {b})).\]

\end{corollary}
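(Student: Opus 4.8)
The plan is to deduce Corollary \ref{probind} directly from Theorem \ref{productlemma} by choosing the right functions $f$ and $g$. First I would set $A = \mathrm{acl}^{eq}(a)$ and $B = \mathrm{acl}^{eq}(b)$; these are algebraically closed, and the hypothesis $a \indep{}{a} b$ gives precisely $A \indep{}{a} B$ (since $\mathrm{acl}^{eq}(\mathrm{acl}^{eq}(a)) = \mathrm{acl}^{eq}(a)$, weak algebraic independence of the tuples is the same as that of their algebraic closures). Next I would take $f = \mathbb{1}_{[\phi(x,a)]}$ and $g = \mathbb{1}_{[\psi(x,b)]}$, the indicator functions in $L^2(\mu)$ of the clopen sets cut out by the two formulas. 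The point is that $f \in \mathcal{H}_A$: the automorphisms in $G_a$ (a finite subtuple of $A$ suffices, namely $a$ itself) fix the set $[\phi(x,a)]$ setwise, hence fix $f$, so $f$ lies in the defining set of $\mathcal{H}_A$ before taking closure. Likewise $g \in \mathcal{H}_B$.

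With these choices Theorem \ref{productlemma} applies verbatim and yields $\langle f, g\rangle = \langle f, \mathbb{1}\rangle \overline{\langle g, \mathbb{1}\rangle}$. It then remains to translate the three inner products into measures. Since $f$ and $g$ are real-valued $0/1$-indicators, $\langle f, g \rangle = \int f \overline{g}\, \mathrm{d}\mu = \int \mathbb{1}_{[\phi(x,a)] \cap [\psi(x,b)]}\, \mathrm{d}\mu = \mu([\phi(x,a) \wedge \psi(x,b)])$, using that $[\phi(x,a)] \cap [\psi(x,b)] = [\phi(x,a) \wedge \psi(x,b)]$ in the Stone space. Similarly $\langle f, \mathbb{1}\rangle = \int f\, \mathrm{d}\mu = \mu([\phi(x,a)])$ and $\langle g, \mathbb{1}\rangle = \mu([\psi(x,b)])$, both of which are real so the complex conjugate is harmless. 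Substituting gives $\mu(\phi(x,a) \wedge \psi(x,b)) = \mu(\phi(x,a))\,\mu(\psi(x,b))$, as required.

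The only genuine point requiring care — and the step I would flag as the main obstacle, modest as it is — is the passage between the real sort and $\mathcal{M}^{eq}$. The corollary is stated for tuples $a, b$ from $\mathcal{M}^{eq}$ and formulas $\phi, \psi$ with parameters there, while $\mu$ is, strictly speaking, the ergodic measure on $S_x(M^{eq})$ in the real variable $x$ (as fixed by the conventions at the end of Section \ref{K&E}). One must check that $[\phi(x,a)]$, for $\phi$ an $\mathcal{L}^{eq}$-formula with imaginary parameters $a$, is still a Borel (indeed clopen) subset of $S_x(M^{eq})$ and that its indicator is $G_a$-invariant — but this is immediate since $G_a = \mathrm{Aut}(M^{eq}/a)$ fixes $a$ pointwise and hence fixes the formula. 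The hypothesis $\mathrm{acl}^{eq}(\emptyset) = \mathrm{dcl}^{eq}(\emptyset)$ is inherited directly from the statement and is exactly what Theorem \ref{productlemma} requires, so no further work is needed there. Everything else is the routine bookkeeping of rewriting inner products of indicator functions as measures, which I would state in a line or two rather than belabour.
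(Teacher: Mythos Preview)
Your proposal is correct and follows essentially the same route as the paper: set $A=\mathrm{acl}^{eq}(a)$, $B=\mathrm{acl}^{eq}(b)$, take $f=\mathbb{1}_{[\phi(x,a)]}\in\mathcal{H}_A$ and $g=\mathbb{1}_{[\psi(x,b)]}\in\mathcal{H}_B$, apply Theorem~\ref{productlemma}, and unwind the inner products as measures (noting real-valuedness kills the conjugate). Your extra paragraph on the real-sort-versus-$\mathcal{M}^{eq}$ bookkeeping is more careful than the paper's own proof, which simply asserts the membership of the indicators in the relevant $\mathcal{H}$-spaces and computes.
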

\begin{proof}
Note that the indicator functions $\mathbb{1}_{\phi(x, {a})}$ and $\mathbb{1}_{\psi(x, {b})}$ are in $\mathcal{H}_{\mathrm{acl}^{eq}({a})}$ and $\mathcal{H}_{\mathrm{acl}^{eq}({b})}$ respectively, and that
\begin{align*}
    \mu(\phi(x, {a})\cap \psi(x, {b})) & = \int_{S_x(M)} \mathbb{1}_{\phi(x, {a})\cap\psi(x, {b})} \mathrm{d}\mu\\ & =\int_{S_x(M)} \mathbb{1}_{\phi(x, {a})}\cdot \mathbb{1}_{\psi(x, {b})} \mathrm{d}\mu \\ & = \left(\int_{S_x(M)} \mathbb{1}_{\phi(x, A)}\mathrm{d}\mu\right)\overline{\left(\int_{S_x(M)} \mathbb{1}_{\phi(x, {a})}\mathrm{d}\mu\right)} \\
    &=\mu(\phi(x, {a}))\overline{\mu(\psi(x, {b}))} \\ &= \mu(\phi(x, {a}))\mu(\psi(x, {b})).
\end{align*}
Here the third equality holds by Theorem \ref{productlemma} and the last one follows since our measure is real-valued. Hence, the result follows. 
\end{proof}

\begin{remark}\label{converse} This corollary has a partial converse which is well known in ergodic theory \cite[Proposition 4.8]{JahelT}. Fix an invariant Keisler measure $\mu$ and suppose that for any formula $\phi(x,a)$ there is an automorphism $\sigma\in\mathrm{Aut}(M)$ such that 
\[\mu(\phi(x,a)\wedge \phi(x, \sigma \cdot a))=\mu(\phi(x,a))^2.\]
Then, $\mu$ is ergodic. 
\end{remark}

We conclude this section with a brief discussion of the assumption of $\mathrm{acl}^{eq}(\emptyset)=\mathrm{dcl}^{eq}(\emptyset)$ in Theorem \ref{productlemma} and Corollary \ref{probind}. The assumption is needed in order to have the equality  $L^2(\mu)_{\mathrm{acl}^{eq}(\emptyset)}= L^2(\mu)_{\mathrm{dcl}^{eq}(\emptyset)}$. The latter then yields the desired results since we know from Corollary \ref{invfun} that $L^2(\mu)_{\mathrm{dcl}^{eq}(\emptyset)}$ is generated by the constant indicator function. However, we can also obtain the relevant independence results when $\mathrm{acl}^{eq}(\emptyset)\supsetneq\mathrm{dcl}^{eq}(\emptyset)$. The following lemma is common knowledge:

%However, fixing the variable $x$, there is an $\omega$\Hyphdash* categorical expansion $\Tilde{\mathcal{M}}^{eq}$ of $\mathcal{M}^{eq}$ in the language $\Tilde{\mathcal{L}}^{eq}$ obtained by adding finitely many constants to $\mathcal{L}^{eq}$ such that for any invariant Keisler measure on $\Tilde{\mathcal{M}}^{eq}$ in the variable $x$, 
%\[L^2(\mu)_{\mathrm{dcl}^{eq}(\emptyset)}= L^2(\mu)_{\mathrm{acl}^{eq}(\emptyset)}.\]
%In order to see this, we need the following standard result about $\omega$\Hyphdash* categorical structures:

\begin{lemma}\label{finacl} Let $\mathcal{M}$ be $\omega$\Hyphdash* categorical. Let $A\subseteq M$ be finite. Fix the variable $x$ in the home sort. Then, there is $A_0\subseteq \mathrm{acl}^{eq}(A)$ finite such that for every $b$, a tuple from $\mathcal{M}$ in the variable $x$,
    \begin{equation}\label{finacleqn}
        \mathrm{tp}(b/A_0)\vdash \mathrm{tp}(b/\mathrm{acl}^{eq}(A)).
    \end{equation}
By this we mean that if $b$ and $b'$ in the variable $x$ have the same type over $A_0$, then they have the same type over $\mathrm{acl}^{eq}(A)$ in $\mathcal{M}^{eq}$.

\end{lemma}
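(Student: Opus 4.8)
The plan is to reduce the lemma to the statement that, in the variable $x$, there are only finitely many complete types over $\mathrm{acl}^{eq}(A)$, and then to prove that finiteness by a pigeonhole argument that exploits the fact that a finite $A$-definable set of imaginaries is fixed setwise by every element of $\mathrm{Aut}(\mathcal{M}/A)$. I will freely use that a countable $\omega$-categorical structure is $\omega$-saturated and $\omega$-homogeneous, and that by Ryll-Nardzewski every complete type over a finite set (in any sort and finite arity) is isolated, so every finite $\mathrm{Aut}(\mathcal{M}/A)$-orbit in $\mathcal{M}^{eq}$ is an $A$-definable set. Suppose the conclusion fails. Given a finite $A$-definable $C\subseteq \mathrm{acl}^{eq}(A)$, unravelling the failure of "$\mathrm{tp}(b/C)$ determines $\mathrm{tp}(b/\mathrm{acl}^{eq}(A))$" and using $\omega$-saturation yields real $x$-tuples $b,b'\in\mathcal{M}$ and a finite $\bar e\subseteq\mathrm{acl}^{eq}(A)$ with $b\equiv_C b'$ but $b\not\equiv_{C\cup\bar e}b'$; since $\bar e\in\mathrm{acl}^{eq}(A)$ its $\mathrm{Aut}(\mathcal{M}/A)$-orbit $O$ is finite, so $C':=C\cup O$ is finite, $A$-definable, contains $C$, has $b\not\equiv_{C'}b'$, and hence strictly contains $C$. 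Iterating produces a strictly increasing chain $C_0\subsetneq C_1\subsetneq\cdots$ of finite $A$-definable subsets of $\mathrm{acl}^{eq}(A)$ together with real $x$-tuples $b_i,b_i'$ such that $b_i\equiv_{C_i}b_i'$ and $b_i\not\equiv_{C_{i+1}}b_i'$.

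The crux is now a pigeonhole-and-conjugate step. By Ryll-Nardzewski there are only finitely many types of pairs of real $x$-tuples over $A$, so along some subsequence $i_1<i_2<\cdots$ the type $\mathrm{tp}(b_{i_m}b_{i_m}'/A)$ is constant, and by $\omega$-homogeneity there are $\tau_m\in\mathrm{Aut}(\mathcal{M}/A)$ with $\tau_m(b_{i_1},b_{i_1}')=(b_{i_m},b_{i_m}')$. Because $C_{i_m}$ and $C_{i_m+1}$ are $A$-definable they are $\tau_m$-invariant as sets, so conjugating the relations $b_{i_m}\equiv_{C_{i_m}}b_{i_m}'$ and $b_{i_m}\not\equiv_{C_{i_m+1}}b_{i_m}'$ by $\tau_m^{-1}$ gives, for the \emph{single} pair $b:=b_{i_1}$, $b':=b_{i_1}'$, that $b\equiv_{C_{i_m}}b'$ and $b\not\equiv_{C_{i_m+1}}b'$ for every $m$. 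Setting $D:=\bigcup_m C_{i_m}\subseteq\mathrm{acl}^{eq}(A)$, every finite subset of $D$ lies in some $C_{i_m}$, so $b\equiv_D b'$; but $C_{i_m+1}\subseteq C_{i_{m+1}}\subseteq D$ is finite and $b\not\equiv_{C_{i_m+1}}b'$, a contradiction. Hence there are finitely many types over $\mathrm{acl}^{eq}(A)$ in $x$. The obstacle here, and the reason this is the delicate part, is that a direct compactness argument on the failing pairs collapses: "$b\not\equiv_{\mathrm{acl}^{eq}(A)}b'$" is an open condition that need not survive in a limit. Keeping the pair fixed and enlarging the parameter set instead is what circumvents this, and it is legitimate precisely because the $C_i$ can be chosen $A$-definable, hence $\mathrm{Aut}(\mathcal{M}/A)$-invariant.

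To finish, for each pair of distinct types over $\mathrm{acl}^{eq}(A)$ in $x$ choose a formula, with finitely many parameters from $\mathrm{acl}^{eq}(A)$, lying in one and not the other; let $A_0$ be $A$ together with all these (finitely many) parameters, a finite subset of $\mathrm{acl}^{eq}(A)$. If $b,b'$ are real $x$-tuples of $\mathcal{M}$ with $b\equiv_{A_0}b'$, then the chosen formulas cannot separate their types over $\mathrm{acl}^{eq}(A)$, so $b\equiv_{\mathrm{acl}^{eq}(A)}b'$ — which is exactly the meaning of $\mathrm{tp}(b/A_0)\vdash\mathrm{tp}(b/\mathrm{acl}^{eq}(A))$ recorded after the statement.
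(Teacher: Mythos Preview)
Your argument is correct. The paper does not actually prove this lemma: it simply cites \cite{EvansTsk}, Lemma~2.4, so your write-up supplies strictly more than the paper does. The pigeonhole-and-conjugate step is the right idea and is carried out cleanly; the key point, that the $C_i$ can be chosen to be finite $\mathrm{Aut}(\mathcal{M}/A)$-invariant subsets of $\mathrm{acl}^{eq}(A)$ so that conjugating by $\tau_m\in\mathrm{Aut}(\mathcal{M}/A)$ preserves them setwise, is exactly what makes the transfer to a single pair $(b,b')$ legitimate.

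One minor structural remark: the detour through ``finitely many types over $\mathrm{acl}^{eq}(A)$'' is not needed. Your contradiction argument already shows directly that some finite $A$-definable $C\subseteq\mathrm{acl}^{eq}(A)$ has the property that $\mathrm{tp}(b/C)$ determines $\mathrm{tp}(b/\mathrm{acl}^{eq}(A))$ for all real $x$-tuples $b$; this $C$ can serve as $A_0$ without passing through the type-counting statement and re-extracting parameters in the final paragraph. That said, the extra step is harmless and the intermediate finiteness statement is of independent interest.
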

\begin{proof}
This is substantially Lemma 2.4 in \cite{EvansTsk}.
\end{proof}

A useful consequences of the lemma is:

\begin{corollary}\label{largera} Let $\mathcal{M}$ be $\omega$\Hyphdash* categorical. Let $\mu$ be an invariant Keisler measure on $\mathcal{M}^{eq}$ in the variable $x$. Then, there is finite $A_0\subseteq \mathrm{acl}^{eq}(\emptyset)$ such that 
\[L^2(\mu)_{A_0}=L^2(\mu)_{\mathrm{acl}^{eq}(\emptyset)}.\]
%Fixing the variable $x$, there is an expansion $\Tilde{\mathcal{M}}^{eq}$ of $\mathcal{M}^{eq}$ by adding finitely many constants to $\mathcal{L}^{eq}$ so that for any invariant Keisler measure $\mu$ for $\Tilde{\mathcal{M}}^{eq}$ in the variable $x$, we have that 
  %  \[L^2(\mu)_{\mathrm{dcl}^{eq}(\emptyset)}= L^2(\mu)_{\mathrm{acl}^{eq}(\emptyset)}.\]
\end{corollary}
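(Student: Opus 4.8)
The plan is to read off $A_0$ directly from Lemma~\ref{finacl} and then show it already captures all of $L^2(\mu)_{\mathrm{acl}^{eq}(\emptyset)}$. First I would reduce to the case where $x$ is a tuple of home-sort variables: an invariant Keisler measure on $\mathcal{M}^{eq}$ in an imaginary variable is a pushforward of one in a home-sort variable, and the operations $L^2(-)_A$ are compatible with such pushforwards, so nothing is lost. With $x$ home-sort, apply Lemma~\ref{finacl} with $A=\emptyset$ to obtain a finite $A_0\subseteq\mathrm{acl}^{eq}(\emptyset)$ with $\mathrm{tp}(b/A_0)\vdash\mathrm{tp}(b/\mathrm{acl}^{eq}(\emptyset))$ for every $b$ in the variable $x$; replacing $A_0$ by the (finitely many, by $\omega$-categoricity) canonical parameters of the $\mathcal{L}(A_0)$-definable subsets of $M^{|x|}$ — which determines the same types — I may assume every element of $A_0$ is such a canonical parameter. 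The inclusion $L^2(\mu)_{A_0}\subseteq L^2(\mu)_{\mathrm{acl}^{eq}(\emptyset)}$ is clear. Since $L^2(\mu)_{\mathrm{acl}^{eq}(\emptyset)}$ is the closure of $\bigcup_{A'}L^2(\mu)_{A'}$ over finite $A'\subseteq\mathrm{acl}^{eq}(\emptyset)$, while $L^2(\mu)_{A_0}=\{f\in L^2(\mu):G_{A_0}\cdot f=f\}$ is already closed, it suffices to show $L^2(\mu)_{A'}\subseteq L^2(\mu)_{A_0}$ for every finite $A'$ with $A_0\subseteq A'\subseteq\mathrm{acl}^{eq}(\emptyset)$.

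For this I would show that $G_{A_0}$ and $G_{A'}$ have the same orbits on $S_x(M^{eq})$. Granting that, the $G_{A_0}$- and $G_{A'}$-invariant Borel subsets of $S_x(M^{eq})$ coincide; since $\mathrm{Aut}(M)$ is separable, both groups have countable dense subgroups, and a routine argument (using invariance of $\mu$ and that $S_x(M^{eq})$ is a standard Borel space) identifies both $L^2(\mu)_{A_0}$ and $L^2(\mu)_{A'}$ with the $L^2$-space of this common invariant $\sigma$-algebra, giving $L^2(\mu)_{A'}=L^2(\mu)_{A_0}$. To prove the orbit statement, let $p,q\in S_x(M^{eq})$ with $q=\sigma\cdot p$ for some $\sigma\in G_{A_0}$. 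From Lemma~\ref{finacl} one first sees that $\sigma$ fixes $\mathrm{tp}(b/\mathrm{acl}^{eq}(\emptyset))$ for every $b$ in the variable $x$, hence $[\chi(x,c)]=[\chi(x,\sigma c)]$ in $S_x(M^{eq})$ for every $c\in\mathrm{acl}^{eq}(\emptyset)$ and every $\mathcal{L}$-formula $\chi$; equivalently $\sigma$ fixes the canonical parameter of $\chi(\mathbb{M}^{|x|},c)$. So the entire $x$-relevant content of $\mathrm{acl}^{eq}(\emptyset)$ lies in $\mathrm{dcl}^{eq}(A_0)$ and is fixed by $\sigma$. I would then look for $\nu\in\mathrm{Aut}(M)$ that acts trivially on $S_x(M^{eq})$, fixes $A_0$ pointwise, and satisfies $\nu(\sigma a')=a'$ for all $a'\in A'$; then $\tau:=\nu\sigma$ lies in $G_{A'}$ and $\tau\cdot p=\nu\cdot q=q$, as required.

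The main obstacle is the existence of this $\nu$. Writing $D_x$ for the set of elements of $M^{eq}$ fixed by every automorphism acting trivially on $S_x(M^{eq})$ — so that the kernel of $\mathrm{Aut}(M)\to\mathrm{Sym}(S_x(M^{eq}))$ is $G_{D_x}$, and $A_0\subseteq D_x$ by the choice of $A_0$ — such a $\nu$ exists exactly when $A'$ and $\sigma(A')$ have the same type over $D_x$ with the matching $a'\leftrightarrow\sigma(a')$. Since $\sigma$ fixes $A_0$ pointwise, $A'$ and $\sigma(A')$ automatically have the same type over $A_0$ with this matching, so it would be enough to prove that for every finite $A'\subseteq\mathrm{acl}^{eq}(\emptyset)$ one has $\mathrm{tp}(A'/A_0)\vdash\mathrm{tp}(A'/D_x)$ — a statement dual to Lemma~\ref{finacl}, saying that $A_0$ controls not only the $x$-types over $\mathrm{acl}^{eq}(\emptyset)$ but also the interaction of $\mathrm{acl}^{eq}(\emptyset)$ with the $x$-definable imaginaries. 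I expect this to come from the same counting argument that proves Lemma~\ref{finacl}, or by a direct manipulation of it, and verifying it carefully is the delicate step; note that in the frequent situation $\mathrm{acl}^{eq}(\emptyset)\subseteq\mathrm{dcl}^{eq}(A_0)$ it is immediate, since then $L^2(\mu)_{\mathrm{acl}^{eq}(\emptyset)}\subseteq L^2(\mu)_{\mathrm{dcl}^{eq}(A_0)}=L^2(\mu)_{A_0}$ outright.
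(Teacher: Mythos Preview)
Your proposal follows the paper's strategy in outline---take $A_0$ from Lemma~\ref{finacl}, enlarge an arbitrary finite $A'\subseteq\mathrm{acl}^{eq}(\emptyset)$ so that $A_0\subseteq A'$, and argue that $G_{A_0}$ and $G_{A'}$ have the same orbits on $S_x(M^{eq})$---but where the paper does this in three lines, you take a long detour and leave a real gap. The paper simply observes that once $f$ is $G_{A'}$-invariant (with $A'=A\cup A_0$), the choice of $A_0$ forces the $G_{A'}$-orbit of each type in $S_x(M^{eq})$ to coincide with its $G_{A_0}$-orbit, and concludes. There is no reduction to home-sort variables, no replacement of $A_0$ by canonical parameters, no passage through a kernel $K$ or a fixed-point set $D_x$, and no appeal to a ``dual'' version of Lemma~\ref{finacl}.

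Your detour through $D_x$ is both unnecessary and problematic. First, the claim that a suitable $\nu$ exists ``exactly when $A'$ and $\sigma(A')$ have the same type over $D_x$'' presupposes that the kernel $K$ of the action on $S_x(M^{eq})$ equals $G_{D_x}$; but a closed subgroup of $\mathrm{Aut}(M)$ need not be the pointwise stabiliser of its fixed-point set, so this equivalence is not automatic. Second, the ``dual'' statement $\mathrm{tp}(A'/A_0)\vdash\mathrm{tp}(A'/D_x)$ that you isolate as the delicate step is genuinely different from Lemma~\ref{finacl}: the lemma controls $x$-types over $\mathrm{acl}^{eq}(\emptyset)$, not types of tuples \emph{from} $\mathrm{acl}^{eq}(\emptyset)$ over the (possibly large) set $D_x$, and you give no argument for it. This is a gap in your proposal, not merely a detail to be filled in. The paper's route avoids all of this: rather than manufacturing an auxiliary automorphism $\nu$ acting trivially on the whole type space, it works directly with the orbit structure coming from Lemma~\ref{finacl}. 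If you want to flesh out that step, the natural move is to replace $A'$ by its (finite) $G_{A_0}$-orbit so that $G_{A_0}$ permutes $A'$ setwise, and then use the bijection $S_x(A')\to S_x(A_0)$ given by Lemma~\ref{finacl} to see that each $\sigma\in G_{A_0}$ already fixes every type over $A'$; this is much closer to what the paper intends than your $D_x$ machinery.
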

\begin{proof} Let $A_0\subseteq\mathrm{acl}^{eq}(\emptyset)$ be finite such that \ref{finacleqn} from Lemma \ref{finacl} holds. Suppose that $f\in L^2(\mu)$ is $G_A$-invariant for $G=\mathrm{Aut}(M^{eq})$ and $A$ a finite subset of $\mathrm{acl}^{eq}(\emptyset)$. Then, $f$ is also $G_{A'}$-invariant for $A'=A\cup A_0$. But then, by choice of $A_0$ (with respect to the variable $x$), the $G_{A'}$-orbit of any type in $S_x(M^{eq})$ is the same as its $G_{A_0}$-orbit. Hence, $f$ is $G_{A_0}$-invariant.
\end{proof}

For an $\omega$\Hyphdash* categorical structure $\mathcal{M}$ and given the variable $x$ and $A_0$ as above, there are finitely many types over $A_0$ in the variable $x$, isolated by formulas $\chi_1(x), \dots, \chi_m(x)$. By additivity, any invariant Keisler measure $\mu$ can be written as a weighted sum of measures $\mu_{\chi_i}$ for $1\leq i\leq m'\leq m$, where, for $\mu(\chi_i(x))>0$, $\mu_{\chi_i}$ is the $\mathrm{Aut}(M/A_0)$-invariant Keisler measure induced from $\mu$ by
\[\mu_{\chi_i}(\phi(x,a))=\frac{\mu(\phi(x,a)\wedge \chi_i(x))}{\mu(\chi_i(x))}.\]
From Corollary \ref{largera}, we know that for any $\mathrm{Aut}(M/A_0)$-ergodic measure $\nu$ we have that for $a\indep{}{a} b$, and $\mathcal{L}^{eq}$-formulas $\phi(x,y), \psi(x, z)$,
\[\nu(\phi(x,a)\wedge \psi(x, b))=\nu(\phi(x,a))\nu(\psi(x, b)).\]
Hence, in the context of $\mathrm{acl}^{eq}(\emptyset)\supsetneq\mathrm{dcl}^{eq}(\emptyset)$, in order to study invariant Keisler measures on an $\omega$\Hyphdash* categorical structure $\mathcal{M}$ we may naturally move to study the $\mathrm{Aut}(M/A_0)$-invariant Keisler measures. 

\section{A strong independence theorem}\label{SIT}
In this section, we prove that in a simple $\omega$\Hyphdash* categorical structure $\mathcal{M}$ with $\mathrm{acl}^{eq}(\emptyset)=\mathrm{dcl}^{eq}(\emptyset)$ if forking over the emptyset is the same as being universally measure zero, then $\mathcal{M}$ satisfies a stronger version of the independence theorem over finite algebraically closed sets. We conclude with some consequences for $\omega$\Hyphdash* categorical $MS$-measurable structures.\\

Firstly, we note how the measure $\mu(\phi(x, a)\wedge \psi(x, b))$ only depends on the types of the individual parameters when $a$ and $b$ are weakly algebraically independent.

\begin{corollary}\label{aindepinv}
Let $\mathcal{M}$ be an $\omega$\Hyphdash* categorical structure with $\mathrm{acl}^{eq}(\emptyset)=\mathrm{dcl}^{eq}(\emptyset)$. Suppose that $a, b\in \mathcal{M}^{eq}$ are such that $a\indep{}{a} b$. Let $\phi(x, y), \psi(x, z)$ be $\mathcal{L}^{eq}$-formulas. Then, for an arbitrary $\mathrm{Aut}(\mathcal{M})$-invariant Keisler measure $\mu: \mathrm{Def}_x(\mathcal{M})\to[0,1]$,
\[\mu(\phi(x, a)\wedge \psi(x, b))\]
only depends on $\mathrm{tp}(a)$ and $\mathrm{tp}(b)$.
\end{corollary}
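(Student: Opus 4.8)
The plan is to combine the ergodic decomposition of Corollary~\ref{decomp} with the probabilistic independence of Corollary~\ref{probind}. Fix an $\mathrm{Aut}(\mathcal{M})$-invariant Keisler measure $\mu$, formulas $\phi(x,y), \psi(x,z)$, and tuples $a, b\in\mathcal{M}^{eq}$ with $a\indep{}{a}b$. Regard $\mu$ as an $\mathrm{Aut}(M^{eq})$-invariant regular Borel probability measure on $S_x(M^{eq})$; since $\mathcal{M}$ is countable and $\omega$\Hyphdash* categorical, $M^{eq}$ is countable and $S_x(M^{eq})$ is compact metrizable, so Corollary~\ref{decomp} applies and yields a Borel probability measure $\mathfrak{m}$ on $\mathrm{Erg}_x(M^{eq})$, depending only on $\mu$, such that
\[
\mu\bigl(\phi(x,a)\wedge\psi(x,b)\bigr)=\int_{\mathrm{Erg}_x(M^{eq})}\nu\bigl(\phi(x,a)\wedge\psi(x,b)\bigr)\,\mathrm{d}\mathfrak{m}(\nu).
\]

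Next I would apply Corollary~\ref{probind} to each ergodic component. Since $\mathrm{acl}^{eq}(\emptyset)=\mathrm{dcl}^{eq}(\emptyset)$ and $a\indep{}{a}b$, for every $\nu\in\mathrm{Erg}_x(M^{eq})$ we have
\[
\nu\bigl(\phi(x,a)\wedge\psi(x,b)\bigr)=\nu\bigl(\phi(x,a)\bigr)\,\nu\bigl(\psi(x,b)\bigr).
\]
Because $\nu$ is $\mathrm{Aut}(M^{eq})$-invariant and $\mathcal{M}$ is $\omega$\Hyphdash* categorical (hence strongly $\omega$-homogeneous), the value $\nu(\phi(x,a))$ depends only on $\mathrm{tp}(a)$; write it as $F_\nu^\phi(\mathrm{tp}(a))$, and similarly $\nu(\psi(x,b))=F_\nu^\psi(\mathrm{tp}(b))$. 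Note that the maps $\nu\mapsto\nu(\phi(x,a))$ and $\nu\mapsto\nu(\psi(x,b))$ are Borel, being the evaluation maps on the clopen sets $[\phi(x,a)]$ and $[\psi(x,b)]$; this measurability is part of the content of Theorem~\ref{ergdecomp} and Corollary~\ref{decomp}.

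Substituting into the first display gives
\[
\mu\bigl(\phi(x,a)\wedge\psi(x,b)\bigr)=\int_{\mathrm{Erg}_x(M^{eq})}F_\nu^\phi(\mathrm{tp}(a))\,F_\nu^\psi(\mathrm{tp}(b))\,\mathrm{d}\mathfrak{m}(\nu).
\]
With $\mu$, $\phi$, $\psi$ fixed, the right-hand side is a function of $\mathrm{tp}(a)$ and $\mathrm{tp}(b)$ alone; in particular its value does not depend on which weakly algebraically independent pair realizing these two types we started with, which is the assertion. There is no serious obstacle here beyond checking that the ergodic decomposition may legitimately be invoked over $M^{eq}$ and that the evaluation maps used are measurable; both are routine given what has already been established. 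One could alternatively avoid $\mathfrak{m}$ and argue type-by-type, but passing through the decomposition makes the dependence on $\mathrm{tp}(a)$ and $\mathrm{tp}(b)$ transparent.
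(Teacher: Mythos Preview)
Your argument is correct and is essentially the paper's own proof: both combine the ergodic decomposition (Corollary~\ref{decomp}/Theorem~\ref{ergdecomp}) with Corollary~\ref{probind} and the $\mathrm{Aut}(M)$-invariance of each ergodic $\nu$ to see that $\nu(\phi(x,a)\wedge\psi(x,b))$ depends only on $\mathrm{tp}(a)$ and $\mathrm{tp}(b)$, and then integrate. The only cosmetic difference is that the paper phrases this by fixing a second weakly algebraically independent pair $a'\equiv a$, $b'\equiv b$ and showing directly that $\nu(\phi(x,a)\wedge\psi(x,b))=\nu(\phi(x,a')\wedge\psi(x,b'))$ for every ergodic $\nu$, whereas you write out the integral and introduce the notation $F_\nu^\phi$, $F_\nu^\psi$; the content is the same.
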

\begin{proof}
Let $a'\indep{}{a} b'$ be such that $a'\equiv a$ and $b'\equiv b$. Then, by Corollary \ref{probind}, we get that for any ergodic measure $\nu$,
\[\nu(\phi(x, a)\wedge \psi(x, b))=\nu(\phi(x, a))\nu(\psi(x, b))=\nu(\phi(x, a'))\nu(\psi(x, b'))=\nu(\phi(x, a')\wedge \psi(x, b')).\]
But then, by the ergodic decomposition, Corollary \ref{decomp}, we have that for any $\mathrm{Aut}(M)$-invariant Keisler measure
\[\mu(\phi(x, a)\wedge \psi(x, b))=\mu(\phi(x, a')\wedge \psi(x, b')).\]

\end{proof}

\begin{definition} Let $\mathbb{M}$ be a monster model and $A\subset \mathbb{M}$ be a small subset. We write $F(A)$ for the set of formulas with parameters from $\mathbb{M}$ forking over $A$.
\end{definition}

As noted in the introduction, from \cite{lots_of_authors} we know that $F(\emptyset)\subseteq \mathcal{O}(\emptyset)$. We are interested in studying what happens when $F(\emptyset)=\mathcal{O}(\emptyset)$ in order to find a structure where the two sets differs. In an $\omega$\Hyphdash* categorical context, by $\omega$-saturation, nothing is lost by considering formulas with parameters from the countable model $\mathcal{M}$. In fact, if $F(\emptyset)\subsetneq \mathcal{O}(\emptyset)$, this will be witnessed by a formula with parameters from $\mathcal{M}$. To see this, recall the discussion in Remark \ref{noloss}.

\begin{definition}\label{AIT} Let $\mathcal{M}$ be an $\mathcal{L}$-structure. We say that $\mathcal{M}$ \textbf{satisfies the strong independence theorem} over $A\subseteq \mathcal{M}^{eq}$ if the following holds:

Let $a, b, c_0, c_1\in M^{eq}$ be such that $a\indep{A}{a} b$, $c_0\equiv_A c_1$ and $c_0\indep{A}{}a$,  $c_1\indep{A}{}b$. Then, there is $c^*\in \mathcal{M}^{eq}$ such that $c^*\equiv_{Aa} c_0, c^*\equiv_{Ab} c_1$, and $c^*\indep{A}{}ab$.
\end{definition}
Here, the relation $\indep{}{}$ denotes non-forking independence. From Corollary \ref{aindepinv} we obtain that simple $\omega$\Hyphdash* categorical structures where forking coincides with being universally measure zero satisfy the strong independence theorem over the empty set. 

\begin{theorem}\label{strongITT} Let $\mathcal{M}$ be  a simple $\omega$\Hyphdash* categorical structure with $\mathrm{acl}^{eq}(\emptyset)=\mathrm{dcl}^{eq}(\emptyset)$. Suppose that  $F(\emptyset)=\mathcal{O}(\emptyset)$, i.e. a formula forks over the empty set if and only if it is universally measure zero. Then, $\mathcal{M}$ satisfies the strong independence theorem over the empty set. 
\end{theorem}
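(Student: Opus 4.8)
The plan is to derive the strong independence theorem over $\emptyset$ from the \emph{ordinary} independence theorem, using the hypothesis $F(\emptyset)=\mathcal{O}(\emptyset)$ together with Corollary \ref{aindepinv} to bridge the gap between non-forking independence and weak algebraic independence. Fix $a,b,c_0,c_1\in M^{eq}$ as in Definition \ref{AIT} with $A=\emptyset$, and by Ryll--Nardzewski let $\delta_0(y,a)$ and $\delta_1(y,b)$ be formulas isolating $\mathrm{tp}(c_0/a)$ and $\mathrm{tp}(c_1/b)$; since $c_0\indep{}{}a$ and $c_1\indep{}{}b$, neither of these formulas forks over $\emptyset$. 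The whole proof reduces to showing that $\delta_0(y,a)\wedge\delta_1(y,b)$ does not fork over $\emptyset$. Granting this, there is a complete type $q\in S_y(ab)$ containing $\delta_0(y,a)\wedge\delta_1(y,b)$ and not forking over $\emptyset$ (otherwise $\delta_0(y,a)\wedge\delta_1(y,b)$ would imply the finite disjunction of the forking formulas of all its completions over $ab$, hence fork); by $\omega$-saturation $q$ is realised by some $c^*\in M^{eq}$, and then $c^*\models\delta_0(y,a)$ gives $c^*\equiv_a c_0$, $c^*\models\delta_1(y,b)$ gives $c^*\equiv_b c_1$, and $\mathrm{tp}(c^*/ab)=q$ gives $c^*\indep{}{}ab$, which is exactly what is required.

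To prove that $\delta_0(y,a)\wedge\delta_1(y,b)$ does not fork over $\emptyset$, first pass to a ``more independent'' copy of $b$. By the extension property and $\omega$-saturation choose $b''\in M^{eq}$ with $b''\equiv b$ and $b''\indep{}{}a$; pick $\sigma\in\mathrm{Aut}(\mathcal{M})$ with $\sigma(b)=b''$ and put $c_1''=\sigma(c_1)$, so that $\delta_1(y,b'')$ isolates $\mathrm{tp}(c_1''/b'')$, $c_1''\indep{}{}b''$ and $c_1''\equiv c_1\equiv c_0$. Since $a\indep{}{}b''$ (symmetry of forking), the ordinary independence theorem over $\mathrm{acl}^{eq}(\emptyset)=\mathrm{dcl}^{eq}(\emptyset)$, applied to the non-forking extensions $\mathrm{tp}(c_0/a)$ and $\mathrm{tp}(c_1''/b'')$ of the common type $\mathrm{tp}(c_0)=\mathrm{tp}(c_1'')$, produces $c^{**}$ with $c^{**}\equiv_a c_0$, $c^{**}\equiv_{b''}c_1''$ and $c^{**}\indep{}{}ab''$. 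Hence $\delta_0(y,a)\wedge\delta_1(y,b'')$ belongs to the non-forking-over-$\emptyset$ type $\mathrm{tp}(c^{**}/ab'')$, so it does not fork over $\emptyset$.

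Now transfer this statement back to $b$ via the measure. By $F(\emptyset)=\mathcal{O}(\emptyset)$ the formula $\delta_0(y,a)\wedge\delta_1(y,b'')$ is not universally measure zero, so (by Remark \ref{noloss}) some $\mathrm{Aut}(M)$-invariant Keisler measure $\mu$ on $\mathcal{M}^{eq}$ in the variable $y$ assigns it positive measure. We have $a\indep{}{a}b$ by hypothesis, and $a\indep{}{a}b''$ because non-forking independence implies weak algebraic independence in a simple theory; since $b\equiv b''$, Corollary \ref{aindepinv} gives $\mu(\delta_0(y,a)\wedge\delta_1(y,b))=\mu(\delta_0(y,a)\wedge\delta_1(y,b''))>0$. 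Thus $\delta_0(y,a)\wedge\delta_1(y,b)$ is not universally measure zero, and invoking $F(\emptyset)=\mathcal{O}(\emptyset)$ once more it does not fork over $\emptyset$, which completes the argument.

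The main obstacle is exactly this last transfer. The witness $c^{**}$ supplied by the ordinary independence theorem lives over the pair $ab''$, and its type cannot be moved directly to a type over $ab$, since $ab''\not\equiv ab$ in general: we are only given $a\indep{}{a}b$, not $a\indep{}{}b$, so we cannot simply apply the classical independence theorem to the pair $a,b$. What rescues the argument is that the measures of the relevant formulas do not distinguish $ab$ from $ab''$ — this is precisely Corollary \ref{aindepinv}, which ultimately rests on the Jahel--Tsankov orthogonality theorem \ref{deeptheorem} — so that combining this measure-theoretic invariance with the hypothesis $F(\emptyset)=\mathcal{O}(\emptyset)$ promotes the non-forking conclusion from $ab''$ to $ab$. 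A minor point to treat with care is the identification of invariant Keisler measures on $\mathcal{M}$ with those on the monster when applying the definition of universally measure zero, which is exactly the content of Remark \ref{noloss}.
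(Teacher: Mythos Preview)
Your proof is correct and follows essentially the same route as the paper: isolate the types by formulas, pass to $b''\equiv b$ with $b''\indep{}{}a$, use the ordinary independence theorem over $\emptyset$ to see that $\delta_0(y,a)\wedge\delta_1(y,b'')$ does not fork, and then combine Corollary~\ref{aindepinv} with $F(\emptyset)=\mathcal{O}(\emptyset)$ to transfer non-forking from $b''$ back to $b$. Your write-up is in fact somewhat more explicit than the paper's --- you spell out why non-forking of the conjunction yields the required $c^*$, and you track both directions of $F(\emptyset)=\mathcal{O}(\emptyset)$ separately --- but the underlying argument is the same.
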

\begin{proof}
Suppose that there are $a, b, c_0, c_1\in \mathcal{M}^{eq}$ as in Definition \ref{AIT}. Let $\phi(x, a)$ and $\psi(x, b)$ isolate $\mathrm{tp}(c_0/a)$ and $\mathrm{tp}(c_1/b)$. By the existence property of non-forking independence, there is $b'\equiv b$ such that $b'\indep{}{}a$. By Corollary \ref{aindepinv}, for any Keisler measure
\[\mu(\phi(x, a)\wedge \psi(x, b))=\mu(\phi(x, a)\wedge \psi(x, b')).\]
By simplicity, $\phi(x, a)\wedge \psi(x, b')$ does not fork over the empty set since the independence theorem holds over $\emptyset$. Hence, by $F(\emptyset)=\mathcal{O}(\emptyset)$,
$\phi(x, a)\wedge \psi(x, b)$ is not universally measure zero, and so is non-forking over the empty set. This proves the strong independence theorem over the empty-set for $\mathcal{M}$.
\end{proof}

\begin{remark} In general, a simple $\omega$\Hyphdash* categorical theory with $\mathrm{acl}^{eq}(\emptyset)=\mathrm{dcl}^{eq}(\emptyset)$ satisfies the independence theorem over the empty set (see, for example, \cite{me}). However, the condition of $F(\emptyset)=\mathcal{O}(\emptyset)$ yields a strengthening of the independence theorem, where the "base" of the amalgamation is weakly algebraically independent rather than non-forking independent. In fact, while non-forking independence implies weak algebraic independence, the converse does not hold in general.

\begin{definition} We say that an $\omega$\Hyphdash* categorical simple structure $\mathcal{M}$ is \textbf{one-based} if given $A, B\subseteq \mathcal{M}^{eq}$ algebraically closed, then, 
\[A\indep{A\cap B}{} B.\]
\end{definition}
If $\mathcal{M}$ is one-based, for $A\subseteq M$ we have that
\[b\indep{A}{a} c \text{ if and only if } b\indep{A}{} c.\]
In particular, in a one-based structure, satisfying the independence theorem over $A$ is equivalent to satisfying the strong independence theorem over $A$.\\

However, there are not one-based simple $\omega$\Hyphdash* categorical structures. The only known example of this are $\omega$\Hyphdash* categorical Hrushovski constructions. For these, to satisfy the strong independence theorem is a genuinely stronger requirement than satisfying the independence theorem.
\end{remark}

We conclude this section with some consequences for $MS$-measurable structures in an $\omega$\Hyphdash* categorical context. For a general introduction to $MS$-measurable structures we suggest \cite{EM} or the original article \cite{MS}. In \cite{me}, I discuss in detail $\omega$\Hyphdash* categorical $MS$-measurable structures, and we direct the reader to that article for the relevant definitions and some basic results. The general idea is that $MS$-measurable structures have an associated dimension-measure function assigning each definable set a dimension and a measure. The dimension-measure function is invariant (and definable), the measure always takes strictly positive values and the dimension and the measure satisfy Fubini's theorem. Being $MS$-measurable is a property of a theory. An important feature proved in \cite{MS} is that $\mathcal{M}$ is $MS$-measurable if and only if $\mathcal{M}^{eq}$ is.

\begin{lemma}\label{MSF=O} Let $\mathcal{M}$ be an $\omega$\Hyphdash* categorical $MS$-measurable structure. Then, $F(A)=\mathcal{O}(A)$ for any finite $A\subseteq M$.
\end{lemma}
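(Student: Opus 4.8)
The plan is to reduce the claim to the equality $F(\emptyset) = \mathcal{O}(\emptyset)$ in the localised structure $\mathcal{M}_A$ obtained by adding constants for a finite $A \subseteq M$, and then invoke the fact (established for $\omega$\Hyphdash* categorical structures, see \cite{me}) that in an $MS$-measurable structure a definable set has $MS$-dimension strictly less than the ambient generic dimension precisely when it forks, together with the observation that the normalised $MS$-measure on the top-dimensional part gives an invariant Keisler measure. Concretely, first I would recall that $MS$-measurability is preserved under adding finitely many constants (this is immediate from the definition, since the dimension-measure function restricts), so $\mathcal{M}_A$ is again $\omega$\Hyphdash* categorical and $MS$-measurable; moreover forking over $A$ in $\mathcal{M}$ corresponds to forking over $\emptyset$ in $\mathcal{M}_A$, and $\mathcal{O}(A)$ in $\mathcal{M}$ corresponds to $\mathcal{O}(\emptyset)$ in $\mathcal{M}_A$. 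Hence it suffices to prove the statement for $A = \emptyset$.

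Second, the inclusion $F(\emptyset) \subseteq \mathcal{O}(\emptyset)$ is already known from \cite{lots_of_authors} and holds in complete generality, so the content is the reverse inclusion $\mathcal{O}(\emptyset) \subseteq F(\emptyset)$: I must show that a formula $\phi(x,a)$ which does \emph{not} fork over $\emptyset$ is \emph{not} universally measure zero, i.e. it is given positive measure by some invariant Keisler measure. For this I would use the $MS$ dimension-measure function: by the main result of \cite{me}, in an $\omega$\Hyphdash* categorical $MS$-measurable structure we may take the dimension part to be $SU$-rank, and a definable subset of $M^x$ is non-forking over $\emptyset$ exactly when it has the same $SU$-rank $d$ as the sort of $x$. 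Restricting the $MS$-measure to the definable sets of full $SU$-rank $d$ within the sort of $x$, normalising by the ($\emptyset$-definable, finite, positive) total measure of that sort, and checking that the resulting functional is finitely additive and $\mathrm{Aut}(M)$-invariant (which follows from invariance and definability of the $MS$ dimension-measure), yields an invariant Keisler measure $\mu$ with $\mu(\phi(x,a)) > 0$ whenever $\phi(x,a)$ has $SU$-rank $d$, i.e. whenever $\phi(x,a)$ does not fork over $\emptyset$. Passing through $\mathcal{M}^{eq}$ is harmless since $\mathcal{M}$ is $MS$-measurable iff $\mathcal{M}^{eq}$ is.

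The main obstacle I anticipate is the bookkeeping around lower-dimensional pieces: a single formula $\phi(x,a)$ of full rank $d$ may still contain a proper subset of lower rank, and one must be careful that the ``restrict to dimension $d$ and normalise'' construction is genuinely additive on all of $\mathrm{Def}_x(M)$ rather than only on the top-dimensional Boolean subalgebra — this is where the Fubini/additivity axioms of $MS$-measurability, and the precise form of the dimension-measure from \cite{me}, do the real work. A clean way around it is to define $\mu(\psi(x,a))$ to be the $MS$-measure of the dimension-$d$ part of $\psi(x,a)$ divided by the $MS$-measure of the dimension-$d$ part of $x=x$; finite additivity then follows because the dimension-$d$ part of a disjoint union is the disjoint union of the dimension-$d$ parts up to a set of dimension $< d$, which has $MS$-measure contribution zero at level $d$. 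Once this is in place, the theorem follows, and indeed one could alternatively quote \cite{me} directly for the assertion that forking and being universally measure zero coincide in $\omega$\Hyphdash* categorical $MS$-measurable structures, applied to $\mathcal{M}_A$; I would include the short argument above for completeness.
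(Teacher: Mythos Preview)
Your approach is essentially the paper's: use the $MS$ dimension-measure (with dimension taken to be $SU$-rank, via \cite{me}) to manufacture an invariant Keisler measure that gives positive mass to any non-forking formula. The paper skips the add-constants detour and works directly over $A$, but that is cosmetic.

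There is, however, one genuine slip. The biconditional ``a definable subset of $M^x$ is non-forking over $\emptyset$ exactly when it has the same $SU$-rank $d$ as the sort of $x$'' fails in the direction you need. Non-forking of $\phi(x,a)$ only gives a realisation $c$ with $SU(c/a)=SU(c/\emptyset)$; nothing forces $SU(c/\emptyset)$ to be the maximal rank $d$ of the sort. If $\mathrm{tp}(c/\emptyset)$ has rank strictly below $d$, your single normalised measure (concentrated on the top-dimensional part of $x=x$) assigns $\phi(x,a)$ measure zero, and the argument stalls. The paper's proof avoids this by localising not to the whole sort but to a single complete type over $A$: take $c\vDash\phi(x,b)$ with $c\indep{A}{}b$, let $\psi(x,A)$ isolate $\mathrm{tp}(c/A)$, and define the Keisler measure by normalising the $MS$-measure inside $\psi(x,A)$. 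Then $\phi(x,b)\wedge\psi(x,A)$ has the same $SU$-rank as $\psi(x,A)$, hence positive $MS$-measure there, contradicting universal measure zero. Your careful paragraph about additivity on lower-dimensional pieces is correct and addresses one bookkeeping issue, but it does not address this second one; once you replace ``the sort of $x$'' by ``the complete type of a non-forking realisation over $A$'', both issues disappear simultaneously and you recover exactly the paper's argument.
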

\begin{proof}
From \cite{me}, we can take the dimension part of the dimension-measure to be $SU$-rank. Suppose there is any formula $\phi(x,b)$ which does not fork over $A$, but is universally measure zero for any $\mathrm{Aut}(M/A)$-invariant Keisler measure. Let $c\vDash \phi(x,b)$. By non-forking independence, $SU(c/Ab)=SU(c/A)$. Let $\psi(x,A)$ isolate $\mathrm{tp}(c/A)$. By $MS$-measurability we have an induced $\mathrm{Aut}(M/A)$-invariant Keisler measure $\mu_\psi$ on $\mathrm{Def}_x(M)$ given by
\[\mu_\psi(\chi(x,d))=
\begin{cases}
\frac{\mu(\chi(x,d)\wedge \psi(x,A))}{\mu(\psi(x,A))} & \text{ for } SU(\chi(x,d)\wedge \psi(x,A)))=SU(\psi(x,A)),\\
0 & \text{otherwise}.
\end{cases}
\]
where $\mu$ is the measure in the dimension-measure. Being universally measure zero, we have that $\mu_\psi(\phi(x,b))=\mu(\phi(x,b))=0$, which contradicts positivity of the measure. 
\end{proof}

From Corollary \ref{aindepinv} we obtain that $\omega$\Hyphdash* categorical $MS$-measurable structures satisfy the strong independence theorem over the algebraic closures of finite sets. Indeed, we also get the corresponding probabilistic independence statement. Recall from Lemma \ref{finacl} that for $A\subseteq M^{eq}$ the algebraic closure of a finite set, and fixing the variable $x$, there is $A_0\subset A$ finite such that types in the variable $x$ over $A$ are isolated by $\mathcal{L}^{eq}$-formulas with parameters from $A_0$.

\begin{theorem} Let $\mathcal{M}$ be $\omega$\Hyphdash* categorical and $MS$-measurable. Then, it satisfies the strong independence theorem over the algebraic closures of finite sets:\\
Let $A\subseteq M^{eq}$ be the algebraic closure of a finite set, $a_0, a_1, b, c$ tuples from $M^{eq}$. Suppose $a_0\equiv_A a_1$, $b\indep{A}{a}c$ and that $a_0\indep{A}{} b$, $a_0\indep{A}{} c$. Then, there is $a^*$ such that $a^*\equiv_{Ab} a_0$, $a^*\equiv_{Ac} a_1$, and $a^*\indep{A}{}bc$. Moreover, 
\[\mu(\mathrm{tp}(a_0/Ab)\cup \mathrm{tp}(a_1/Ac))=\frac{\mu(a_0/Ab)\mu(a_1/Ac)}{\mu(a_0/A)}.\]
\end{theorem}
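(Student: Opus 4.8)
The plan is to reduce the $MS$-measurable case to the machinery already built up for general invariant Keisler measures. First I would record that $MS$-measurability of $\mathcal{M}$ is equivalent to that of $\mathcal{M}^{eq}$, and that by Lemma \ref{finacl} and the passage to $\mathrm{Aut}(M/A_0)$-invariant measures described after Corollary \ref{largera}, working over $A$ the algebraic closure of a finite set is no worse than working over $\emptyset$ with $\mathrm{acl}^{eq}=\mathrm{dcl}^{eq}$: fix $A_0\subseteq A$ finite isolating all $x$-types over $A$, and note that for $\mathrm{Aut}(M/A_0)$-invariant ergodic measures Corollary \ref{probind} gives the product formula whenever $a\indep{A}{a}b$. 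So throughout I may treat $A$ as a base over which the relevant $\mathrm{acl}^{eq}=\mathrm{dcl}^{eq}$-type conditions hold.

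For the first (purely model-theoretic) assertion, I would argue exactly as in Theorem \ref{strongITT}. Let $\phi(x,b)$ isolate $\mathrm{tp}(a_0/Ab)$ and $\psi(x,c)$ isolate $\mathrm{tp}(a_1/Ac)$. By the existence property of non-forking, pick $c'\equiv_{A} c$ with $c'\indep{A}{}b$, so $c'\indep{A}{a}b$ as well. By the version of Corollary \ref{aindepinv} over $A$ (obtained via Corollary \ref{largera}), $\mu(\phi(x,b)\wedge\psi(x,c))=\mu(\phi(x,b)\wedge\psi(x,c'))$ for every $\mathrm{Aut}(M/A)$-invariant Keisler measure $\mu$. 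Since $b\indep{A}{}c'$, the ordinary independence theorem over $A$ (which holds in any simple $\omega$\Hyphdash* categorical structure with $\mathrm{acl}^{eq}=\mathrm{dcl}^{eq}$ over the base, cf.\ the remark after Theorem \ref{strongITT}) gives that $\phi(x,b)\wedge\psi(x,c')$ does not fork over $A$. By Lemma \ref{MSF=O}, $F(A)=\mathcal{O}(A)$, so $\phi(x,b)\wedge\psi(x,c')$ is not universally measure zero over $A$; hence neither is $\phi(x,b)\wedge\psi(x,c)$, so by $\mathcal{O}(A)=F(A)$ again it is non-forking over $A$. Any realisation $a^*$ of this (completed) type is the desired amalgam: $a^*\equiv_{Ab}a_0$, $a^*\equiv_{Ac}a_1$, and $a^*\indep{A}{}bc$.

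For the measure formula I would use the canonical $MS$-measure $\mu$ from the dimension-measure, with $\psi(x,A)$ isolating $\mathrm{tp}(a^*/A)=\mathrm{tp}(a_0/A)=\mathrm{tp}(a_1/A)$, and the normalised measure $\mu'(\chi)=\mu(\chi\wedge\psi(x,A))/\mu(\psi(x,A))$ from the proof of Lemma \ref{MSF=O}, which is $\mathrm{Aut}(M/A)$-invariant. Decomposing $\mu'$ into ergodic components over $A_0$ via Theorem \ref{ergdecomp} and Corollary \ref{largera}, each ergodic piece $\nu$ satisfies $\nu(\mathrm{tp}(a_0/Ab)\wedge\mathrm{tp}(a_1/Ac))=\nu(\mathrm{tp}(a_0/Ab))\nu(\mathrm{tp}(a_1/Ac))$ because $b\indep{A}{a}c$; integrating back recovers $\mu'(\mathrm{tp}(a_0/Ab))\mu'(\mathrm{tp}(a_1/Ac))$ provided the product structure is preserved, which it is since one factor only involves $b$ and the other only $c$ and the weak independence hypothesis is exactly what Corollary \ref{probind}/Theorem \ref{productlemma} needs (here one must be slightly careful: the clean multiplicativity is at the level of each ergodic $\nu$, and one uses that $\int \nu(P)\nu(Q)\,d\mathfrak{m}=\mu'(P)\mu'(Q)$ fails in general, so the correct route is to note $\mathrm{tp}(a_0/Ab)$ and $\mathrm{tp}(a_1/Ac)$ are weakly independent over $A$ and apply Corollary \ref{aindepinv}-style invariance directly to $\mu'$, reducing to $c'\indep{A}{}b$ where conditional independence gives the product). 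Finally, unwinding $\mu'$ in terms of $\mu$ gives
\[
\mu(\mathrm{tp}(a_0/Ab)\cup\mathrm{tp}(a_1/Ac))=\frac{\mu(\mathrm{tp}(a_0/Ab)\wedge\psi(x,A))\,\mu(\mathrm{tp}(a_1/Ac)\wedge\psi(x,A))}{\mu(\psi(x,A))^2}\cdot\mu(\psi(x,A))=\frac{\mu(a_0/Ab)\mu(a_1/Ac)}{\mu(a_0/A)},
\]
using that $\mathrm{tp}(a_0/Ab)\vdash\psi(x,A)$. The main obstacle I anticipate is precisely this last bookkeeping: making sure the product formula survives the ergodic decomposition and the normalisation, i.e.\ that one genuinely has weak algebraic independence of the two types over $A$ (not just over $\emptyset$) so that Corollary \ref{probind} applies after the shift to $\mathrm{Aut}(M/A_0)$, and that the normalising factor $\mu(\psi(x,A))=\mu(a_0/A)$ is accounted for exactly once.
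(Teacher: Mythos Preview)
Your proof is correct and follows essentially the same route as the paper: pass to $c'\equiv_A c$ with $c'\indep{A}{}b$, use the over-$A$ version of Corollary \ref{aindepinv} to equate $\mu(\phi(x,b)\wedge\psi(x,c))$ with $\mu(\phi(x,b)\wedge\psi(x,c'))$, and read off both the amalgam and the product formula from the forking-independent configuration. The paper's write-up is slightly leaner in two places: it concludes existence of $a^*$ directly from positivity of the $MS$-measure on $\phi(x,b)\wedge\psi(x,c')$ rather than routing through $F(A)=\mathcal{O}(A)$ from Lemma \ref{MSF=O}, and for the measure identity it simply cites \cite{me} for $\mu(\phi(x,b)\wedge\psi(x,c'))=\mu(\phi(x,b))\mu(\psi(x,c'))/\mu(\chi(x))$ when $b\indep{A}{}c'$---precisely the ``conditional independence gives the product'' step you invoke---so the ergodic-decomposition detour you correctly abandon never arises.
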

\begin{proof} Let $\phi(x, b)$ and $\psi(x, c)$ isolate $\mathrm{tp}(a_0/Ab)$ and $\mathrm{tp}(a_1/Ac)$ respectively. Let $\chi(x)$ isolate $\mathrm{tp}(a_i/A)$, and let $\mu_\chi$ be the $\mathrm{Aut}(M^{eq}/A)$ invariant Keisler measure induced by $\chi(x)$. We consider $\mu_\chi(\phi(x, b)\wedge \psi(x, c))$. By extension we can find $c'\equiv_A c$ such that $c'\indep{A}{}b$. Since non-forking independence implies weak algebraic independence, by Corollary \ref{aindepinv}
\[\mu_\chi(\phi(x, b)\wedge \psi(x, c'))=\mu_\chi(\phi(x, b)\wedge \psi(x, c)).\]
But the former has positive measure by the independence theorem over algebraically closed sets (and $d$-independence being the same as non-forking independence \cite{me}). Hence, $\phi(x, b)\wedge \psi(x, c)$ must have a realisation which is independent from $bc$ over $A$. Moreover, by \cite{me}, for $b\indep{A}{}c'$ we have that 
\[\mu(\phi(x, b)\wedge \psi(x, c))=\mu(\phi(x, b)\wedge \psi(x, c'))=\frac{\mu(\phi(x, b))\mu(\psi(x, c))}{\mu(\chi(x))}.\]
This yields the desired equation.
\end{proof}

\begin{remark}
By the proof above and Remark \ref{converse}, an $MS$-measurable $\omega$\Hyphdash* categorical structure induces various ergodic Keisler measures. Let $A\subseteq M^{eq}$ be the algebraic closure of a finite set and $\chi(x)$ isolate a type over $A$. Then, the induced $\mathrm{Aut}(M/A)$-invariant Keisler measure $\mu_\chi$ is $\mathrm{Aut}(M/A)$-ergodic. We can also prove this using \cite{me} and Remark \ref{converse}.
\end{remark}

\section{The counterexample}\label{counterex}
We are now ready to introduce our example of a simple $\omega$\Hyphdash* categorical structure with $F(\emptyset)\subsetneq \mathcal{O}(\emptyset)$. It will be an $\omega$\Hyphdash* categorical Hrushovski construction. For details on Hrushovski constructions the reader may refer to \cite[$\S$ 6.2.1]{Wagner:ST}. This construction is also used in \cite{me} as an example of an $\omega$\Hyphdash* categorical supersimple structure which is not $MS$-measurable, so the reader may refer to that article for details on this particular construction. The basic idea is that (non-trivial) simple $\omega$\Hyphdash* categorical Hrushovski constructions are not one-based. Hence, there are pairs which are weakly algebraic independent, but forking-dependent. This allows us to build Hrushovski constructions which are simple but which do not satisfy the strong independence theorem. 

\begin{theorem}\label{constr} There is an $\omega$\Hyphdash* categorical graph $\mathcal{M}$ supersimple of $SU$-rank $2$ with the following properties:
\begin{itemize}
\item $\mathrm{Aut}(M)$ acts transitively on the vertices of $M$.
    \item Points are algebraically closed;
    \item Edges are algebraically closed, but for $b\in M$ the formula $E(x,b)$ asserting that $x$ has an edge with $b$ forks over $\emptyset$;
    \item For $a, b\in M$ with no edge between them, $\mathrm{acl}(ab)=ab$ or $\mathrm{acl}(ab)=abc$, where $abc$ is a path of length two with endpoints $a$ and $b$. In either case, $a\indep{}{}b$;
    \item The smallest $k$-cycle in $\mathcal{M}$ is a $6$-cycle;
    \item The structure $\mathcal{M}$ has weak elimination of imaginaries;
\end{itemize}
Furthermore, we can choose $\mathcal{M}$ to satisfy independent $n$-amalgamation over the algebraic closures of finite sets for any $n\in\mathbb{N}$, and also for all $n\in\mathbb{N}$.
\end{theorem}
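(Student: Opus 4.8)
The plan is to obtain $\mathcal{M}$ as the $\mu$-collapsed generic model of a Hrushovski amalgamation construction, and then to read off each bulleted property from the predimension calculus; since this particular construction is carried out in detail in \cite{me}, I only sketch the argument and indicate where each point comes from.

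\emph{The amalgamation class.} I would take the predimension $\delta(A)=2|A|-|E(A)|$ on finite graphs and let $\mathcal{C}_0$ be the class of finite graphs of girth at least $6$ with $\delta(B)\geq 0$ for every subset $B$, equipped with the self-sufficient relation $A\leq B$ (i.e.\ $\delta(A)\leq\delta(A')$ for all $A\subseteq A'\subseteq B$). The first task is to check that $(\mathcal{C}_0,\leq)$ has the hereditary property, joint embedding and amalgamation; the amalgamation is the canonical one (free amalgamation over a strong base, then self-sufficient closure), and the point requiring care is that the girth bound survives it -- strongness of the base rules out the short internally disjoint paths whose union would create a cycle of length $<6$. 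This yields a countable $\leq$-generic model $\mathcal{M}_0$ with the usual extension property, a dimension $d(X)=\min\{\delta(Y):X\subseteq Y\text{ finite}\}$, and self-sufficient closure $\mathrm{cl}$.

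\emph{Collapse and the general structure theory.} Following Hrushovski, I would fix a function $\mu$ bounding, for each isomorphism type of a minimal $0$-simply-algebraic extension $A\leq B$ (i.e.\ $\delta(B)=\delta(A)$ with $B$ minimal such), the number of pairwise $A$-disjoint copies of $B$ over a fixed $A$; passing to the subclass $\mathcal{C}_\mu\subseteq\mathcal{C}_0$ and taking its $\leq$-generic model $\mathcal{M}:=\mathcal{M}_\mu$, one gets that every finite set has a finite self-sufficient closure of uniformly bounded size, so $\mathcal{M}$ is $\omega$-categorical. Checking that $\mathcal{C}_\mu$ is still an amalgamation class is the classical Hrushovski difficulty, and is the step I expect to be the main obstacle. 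Granting it, one shows -- exactly as in \cite{me} -- that forking independence in $\mathcal{M}$ is the $d$-independence relation (intersection of closures equal to the closure of the base, no edges between the new parts, and additivity of $d$), which satisfies the Kim--Pillay axioms (the independence theorem over algebraically closed sets being $3$-amalgamation of $\mathcal{C}_\mu$); hence $\mathrm{Th}(\mathcal{M})$ is simple with this as forking, the $SU$-rank of a vertex is bounded by $d(a/\emptyset)=\delta(\{a\})=2$ and is exactly $2$ (witnessed by the chain: generic vertex, then vertex adjacent to a point, then common neighbour of a non-edge), so $\mathcal{M}$ is supersimple of $SU$-rank $2$; weak elimination of imaginaries is standard for such graph constructions.

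\emph{Reading off the bullets.} Each remaining property is a short predimension computation. Transitivity on vertices is immediate since the one-vertex graph is the unique minimal structure. A new vertex over a singleton would need two edges back to avoid raising $\delta$, which is impossible, so $\mathrm{cl}(\{a\})=\{a\}$ and points are algebraically closed. For an adjacent pair $\{a,b\}$, girth $\geq 6$ forbids any extension in $\mathcal{C}_0$ of $\delta$-cost $\leq 0$ (any such would force a cycle of length $<6$ or a $\delta$-negative subgraph), so $\mathrm{cl}(\{a,b\})=\{a,b\}$ and edges are algebraically closed; nevertheless $E(x,b)$ forks over $\emptyset$, since $d(a/b)=\delta(\{a,b\})-\delta(\{b\})=3-2=1<2$, and indeed $E(x,b)$ divides -- in a Morley sequence $(b_i)$ of vertices no vertex can be adjacent to three terms, because three mutually independent vertices have $d=6$ while a vertex adjacent to all three forces $d$ below $6$ (e.g.\ $\delta(\{a,b_i,b_j,b_k\})=8-3=5$), so $\{E(x,b_i):i<\omega\}$ is $3$-inconsistent. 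For a non-adjacent pair $\{a,b\}$, the only minimal $\delta$-cost-$0$ extension inside $\mathcal{C}_0$ is a single common neighbour $c$ (the path $a$--$c$--$b$; any richer one creates a short cycle or goes $\delta$-negative), and $\mu$ bounds their number, so $\mathrm{acl}(ab)$ is $\{a,b\}$ or $\{a,b,c\}$ according to the $2$-type of the non-edge; in both cases $d(a/b)=2=d(a/\emptyset)$ and the closures satisfy the freeness and additivity conditions, so $a\indep{}{}b$. The girth statement is built into $\mathcal{C}_0$, with $6$-cycles ($\delta(C_6)=6$) embedding self-sufficiently. Finally, independent $n$-amalgamation over algebraically closed sets follows, via the identification of $d$-independence with forking, from the higher amalgamation of $\mathcal{C}_\mu$, which can be arranged for any prescribed $n$ -- or for all $n$ simultaneously -- by a suitable choice of $\mu$, as in \cite{me}.
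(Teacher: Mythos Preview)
Your approach is the same as the paper's: both take $\mathcal{M}$ to be the $\omega$-categorical Hrushovski construction of \cite[Construction 5.1]{me} and read the bulleted properties off the predimension calculus, with the paper simply citing \cite{me} for all details while you supply a sketch. Your predimension computations for each bullet (closure of points and edges, the forking/dividing of $E(x,b)$, the two $2$-types of non-edges, $SU$-rank $2$) are correct and are exactly the ``basic calculations with the dimension'' the paper alludes to.

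One caveat on a specific claim in your sketch: free amalgamation over a strong base does \emph{not} preserve the girth bound in the way you assert. Take $C=\{c_1,c_2\}$ non-adjacent, $A=C\cup\{a\}$ with $a$ adjacent to both $c_i$, and $B=C\cup\{b\}$ likewise; then $\delta(C)=4=\delta(A)=\delta(B)$, so $C\leq A$ and $C\leq B$ in your (non-strict) sense, both $A,B$ are trees in $\mathcal{C}_0$, yet the free amalgam is a $4$-cycle. Strongness only forbids a vertex of $A\setminus C$ having \emph{three} or more edges into $C$, not two, so your sentence ``strongness of the base rules out the short internally disjoint paths'' is not correct as stated. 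The amalgamation for the girth-constrained class therefore needs a different justification (e.g.\ a non-free amalgam identifying the two $0$-extensions, or building the girth restriction into the choice of $\mu$ rather than into $\mathcal{C}_0$). Since both you and the paper defer to \cite{me} for the actual verification, this does not undermine your proposal---but you should not rely on that particular argument.
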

\begin{proof}
The structure is an $\omega$\Hyphdash* categorical Hrushovski construction. It is the same construction as in \cite[Construction 5.1]{me}. In the appendix of that article we prove supersimplicity, weak elimination of imaginaries and note how higher independent amalgamation can be obtained. The other properties also follow by construction and basic calculations with the dimension, recalling that in a Hrushovski construction, the Hrushovski dimension corresponds to $SU$-rank \cite[Corollary 6.2.26]{Wagner:ST}.
\end{proof}

\begin{remark} The graph $\mathcal{M}$ in Theorem \ref{constr} is also \textbf{extremely amenable} in the sense of \cite{FOAmen}. That is, every type in $S(\emptyset)$ extends to an $\mathrm{Aut}(M)$-invariant type over $M$. Since invariant types can be considered as invariant Keisler measures taking only the values $0$ and $1$, this guarantees that there are some invariant Keisler measures on $\mathcal{M}$ in each variable. To see that these structures are extremely amenable, for a finite tuple $\overline{a}$ from $M$, consider the type $p(\overline{x})$ given by
\[\bigcup_{B\subset M \text{ finite}}\left\{\mathrm{tp}(\overline{a}'/B) \mid \overline{a}'\equiv\overline{a}, \mathrm{acl}(\overline{a}B)=\mathrm{acl}(\overline{a})\cup\mathrm{acl}(B), \neg E(c,b) \text{ for }c\in\mathrm{acl}(\overline{a}), b\in\mathrm{acl}(B)\right\}.\]
Substantially, $p(\overline{x})$ asserts that $\overline{x}$ has no relation to $\mathcal{M}$ and is weakly algebraically independent from it. From  \cite[Section 4]{me} we can conclude that this type is consistent, complete and invariant by the extension property and because types of finite tuples are determined by the quantifier-free types of their algebraic closures.
\end{remark}

It is commonly known that in a structure with weak elimination of imaginaries, for $A, B\subseteq M$ algebraically closed in $\mathcal{M}$, we have that
\[\mathrm{acl}^{eq}(A)\cap\mathrm{acl}^{eq}(B)=\mathrm{acl}^{eq}(A\cap B).\]
Hence, for $a$ and $b$ sharing an edge, $a\indep{}{a} b$, but $\mathrm{tp}(a/b)$ forks over $\emptyset$. Meanwhile, for $a$ and $c$ at distance two from each other, $a\indep{}{}c$. Hence, for $\mathcal{M}$ to satisfy the strong independence theorem over the empty set, $\mathcal{M}_f$ should include $5$-cycles. But we have built $\mathcal{M}_f$ to exclude these. This implies the existence of a formula which does not fork over the empty-set, but is universally measure zero. We can give this formula explicitly:

\begin{theorem} Let $\mathcal{M}$ be the $\omega$\Hyphdash* categorical structure described in Theorem \ref{constr}. Let $\phi(x, y)$ be the formula stating that the points $x$ and $y$ are exactly at distance two from each other. Then, for $a\in M$, the formula $\phi(x, a)$ is universally measure zero but does not fork over the emptyset. 
\end{theorem}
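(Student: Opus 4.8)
The plan is to establish the two assertions in turn, with essentially all of the work going into the second.

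That $\phi(x,a)$ does not fork over $\emptyset$ is quick. Since the shortest cycle in $\mathcal{M}$ is a $6$-cycle, the graph contains a path of length two, so there is a vertex $c$ sharing a neighbour with $a$ and distinct from $a$; as there are no triangles, $\neg E(a,c)$, hence $c\models\phi(x,a)$. By the fourth item of Theorem~\ref{constr} applied to the non-adjacent pair $a,c$ we get $c\indep{}{}a$, so $\phi(x,a)$ lies in the non-forking type $\mathrm{tp}(c/a)$ and therefore $\phi(x,a)\notin F(\emptyset)$.

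For the universally-measure-zero assertion, by the ergodic decomposition (Corollary~\ref{decomp}) it suffices to prove that $\mu(\phi(x,a))=0$ for every ergodic measure $\mu$ on $\mathcal{M}$ in the variable $x$. I would first record that $\mathcal{M}$ satisfies $\mathrm{acl}^{eq}(\emptyset)=\mathrm{dcl}^{eq}(\emptyset)$: transitivity of $\mathrm{Aut}(M)$ on vertices gives $\mathrm{acl}(\emptyset)=\emptyset$, and weak elimination of imaginaries then yields $\mathrm{acl}^{eq}(\emptyset)=\mathrm{dcl}^{eq}(\mathrm{acl}(\emptyset))=\mathrm{dcl}^{eq}(\emptyset)$, so that Corollary~\ref{probind} is available. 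Write $\alpha=\mu(\phi(x,a))$; by transitivity on vertices and invariance of $\mu$, $\alpha$ does not depend on the choice of vertex. Now pick a vertex $b$ adjacent to $a$. Since points are algebraically closed, weak elimination of imaginaries gives $\mathrm{acl}^{eq}(a)\cap\mathrm{acl}^{eq}(b)=\mathrm{acl}^{eq}(\emptyset)$, i.e. $a\indep{}{a}b$, so Corollary~\ref{probind} yields
\[\mu\bigl(\phi(x,a)\wedge\phi(x,b)\bigr)=\mu(\phi(x,a))\,\mu(\phi(x,b))=\alpha^{2}.\]
The heart of the matter is that $\phi(x,a)\wedge\phi(x,b)$ is inconsistent whenever $E(a,b)$ holds: a realisation $c$ would come with vertices $d_1,d_2$ satisfying $E(a,d_1)\wedge E(d_1,c)$ and $E(b,d_2)\wedge E(d_2,c)$, and these edges together with $E(a,b)$ form a closed walk $a,d_1,c,d_2,b,a$ of length five, which forces an odd cycle of length at most $5$ in $\mathcal{M}$ (a triangle or a $5$-cycle), contradicting the fact that $\mathcal{M}$ has no cycle of length less than $6$. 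Hence $\mu(\phi(x,a)\wedge\phi(x,b))=0$, giving $\alpha^{2}=0$ and $\alpha=0$; as $\mu$ was arbitrary, $\phi(x,a)\in\mathcal{O}(\emptyset)$.

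I do not expect a genuine obstacle here. The points needing a little care are the two uses of weak elimination of imaginaries — to get $\mathrm{acl}^{eq}(\emptyset)=\mathrm{dcl}^{eq}(\emptyset)$ (needed to invoke Corollary~\ref{probind}) and to get $a\indep{}{a}b$ — together with a short check of the possible coincidences among $a,b,c,d_1,d_2$ in the girth argument; both are routine given the properties listed in Theorem~\ref{constr}. Alternatively one could deduce the existence of such a $\phi$ abstractly from Theorem~\ref{strongITT}, since $a,b$ adjacent together with vertices at distance two from $a$ and from $b$ respectively witness a failure of the strong independence theorem over $\emptyset$; but the direct computation above has the advantage of pinning down the explicit formula.
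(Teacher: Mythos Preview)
Your argument is correct and follows exactly the route taken in the paper: pick $b$ adjacent to $a$, use weak elimination of imaginaries to get $a\indep{}{a}b$, apply Corollary~\ref{probind} to an ergodic measure, and conclude from the girth-$6$ hypothesis that $\phi(x,a)\wedge\phi(x,b)$ is inconsistent, whence $\mu(\phi(x,a))^2=0$. You have in fact spelled out several points (the verification that $\mathrm{acl}^{eq}(\emptyset)=\mathrm{dcl}^{eq}(\emptyset)$, the case analysis for the closed walk of length five, and the non-forking of $\phi(x,a)$ via an explicit realisation) that the paper leaves implicit or handles in the surrounding text.
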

\begin{proof}
Let $a, b\in M$ share an edge. Now, $\phi(x, a)\wedge \phi(x, b)$ is inconsistent since $\mathcal{M}$ avoids $5$-cycles. Meanwhile, $\phi(x, a)$ does not fork over the empty set since $c\indep{}{}a$ for $c$ at distance two from $a$. However, since $a\indep{}{a} b$, for an ergodic measure,
\[0=\mu(\phi(x, a)\wedge \phi(x, b))=\mu(\phi(x, a))\mu(\phi(x, b))=\mu(\phi(x, a))^2,\]
where the second equality follows by Corollary \ref{probind} and the last equality by transitivity of the action of $\mathrm{Aut}(M)$. From the calculation above we get that  $\mu(\phi(x, a))=0$ for any ergodic measure. By Corollary \ref{decomp}, $\phi(x,a)$ is universally measure zero. 
\end{proof}

Hence, we get the desired counterexample:

\begin{theorem}\label{mainthm} There are $\omega$\Hyphdash* categorical supersimple theories $T$ of finite $SU$-rank with a formula $\phi(x, a)$ which does not fork over the empty set, but which is universally measure zero. 
\end{theorem}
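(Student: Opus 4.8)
The statement is an existential repackaging of the construction of the previous section, so the plan is simply to exhibit a witness. I would take $T=\mathrm{Th}(\mathcal{M})$, where $\mathcal{M}$ is the $\omega$\Hyphdash* categorical graph supplied by Theorem \ref{constr}: it is supersimple of $SU$-rank $2$, hence of finite $SU$-rank; $\mathrm{Aut}(M)$ is transitive on vertices; points and edges are algebraically closed; $\mathcal{M}$ has weak elimination of imaginaries; and the shortest cycle in $\mathcal{M}$ is a $6$-cycle. For the formula I would take $\phi(x,y)$ asserting that $x$ and $y$ are at distance exactly two, exactly as in the preceding theorem.

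First I would check that $\phi(x,a)$ does not fork over $\emptyset$: by the fourth bullet of Theorem \ref{constr}, any $c$ at distance two from $a$ satisfies $c\indep{}{}a$, and such $c$ exist, so $\phi(x,a)$ is realised by a non-forking extension of $\mathrm{tp}(a)$, i.e.\ $\phi(x,a)\notin F(\emptyset)$. Next, for $a,b\in M$ sharing an edge, $\phi(x,a)\wedge\phi(x,b)$ is inconsistent: a common distance-two neighbour, together with the edge $ab$, would produce a closed walk of length $5$, hence a cycle of length at most $5$, contradicting girth $6$ (the degenerate cases collapse to a triangle, which is likewise excluded). I would also record the standing hypotheses of Corollary \ref{probind}: transitivity on vertices together with $\omega$\Hyphdash* categoricity forces $\mathrm{acl}(\emptyset)=\emptyset$, and then weak elimination of imaginaries gives $\mathrm{acl}^{eq}(\emptyset)=\mathrm{dcl}^{eq}(\emptyset)$; moreover, since points are algebraically closed and $\mathcal{M}$ has weak elimination of imaginaries, $\mathrm{acl}^{eq}(a)\cap\mathrm{acl}^{eq}(b)=\mathrm{acl}^{eq}(a\cap b)=\mathrm{acl}^{eq}(\emptyset)$ for distinct points $a,b$, that is, $a\indep{}{a}b$.

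The measure computation is then immediate. For an arbitrary ergodic measure $\mu$ on $\mathcal{M}$ in the variable $x$, Corollary \ref{probind} applied to the weakly algebraically independent pair $a,b$ gives
\[\mu(\phi(x,a)\wedge\phi(x,b))=\mu(\phi(x,a))\,\mu(\phi(x,b)),\]
the left-hand side is $0$ by inconsistency, and transitivity of $\mathrm{Aut}(M)$ on vertices gives $\mu(\phi(x,a))=\mu(\phi(x,b))$, so $\mu(\phi(x,a))^2=0$ and hence $\mu(\phi(x,a))=0$ for every ergodic $\mu$. The ergodic decomposition (Corollary \ref{decomp}) upgrades this to $\mu(\phi(x,a))=0$ for every $\mathrm{Aut}(M)$-invariant Keisler measure, and Remark \ref{noloss} transfers the conclusion to the monster model, so $\phi(x,a)\in\mathcal{O}(\emptyset)$. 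Thus $T$ and $\phi(x,a)$ witness the theorem.

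There is essentially no obstacle remaining: all the substance sits in Theorem \ref{constr} (the Hrushovski construction and its properties, from the companion article) and in Corollary \ref{probind} (resting on the Jahel--Tsankov orthogonality theorem). The only points requiring a moment's care are verifying the hypotheses of Corollary \ref{probind} --- chiefly $\mathrm{acl}^{eq}(\emptyset)=\mathrm{dcl}^{eq}(\emptyset)$ and the weak algebraic independence of an adjacent pair --- and checking that the separation is genuine rather than vacuous: this is guaranteed by the observation that $\mathcal{M}$ is extremely amenable, so invariant Keisler measures in the relevant variable do exist, while $\phi(x,a)\notin F(\emptyset)$ is secured by the non-forking argument above.
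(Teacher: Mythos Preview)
Your proposal is correct and follows essentially the same route as the paper: take $\mathcal{M}$ from Theorem \ref{constr}, let $\phi(x,y)$ assert distance exactly two, pick an adjacent pair $a,b$, use weak elimination of imaginaries to get $a\indep{}{a}b$, apply Corollary \ref{probind} and transitivity to force $\mu(\phi(x,a))^2=0$ for every ergodic $\mu$, and conclude via Corollary \ref{decomp}. Your write-up is in fact more explicit than the paper's on the verification of $\mathrm{acl}^{eq}(\emptyset)=\mathrm{dcl}^{eq}(\emptyset)$ and of the weak algebraic independence of an edge, which the paper leaves largely to the reader.
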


Furthermore, by Lemma \ref{MSF=O}, we get a counterexample to the question of Elwes and Macpherson \cite{EM}:

\begin{corollary} There are $\omega$\Hyphdash* categorical supersimple theories $T$ of finite $SU$-rank which are not $MS$-measurable.
\end{corollary}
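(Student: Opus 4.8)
The plan is to combine the structural properties of the Hrushovski construction from Theorem~\ref{constr} with the probabilistic independence result of Corollary~\ref{probind}. First I would take $\mathcal{M}$ to be the $\omega$\Hyphdash* categorical supersimple graph of $SU$-rank $2$ provided by Theorem~\ref{constr}, noting that it has weak elimination of imaginaries and that $\mathrm{Aut}(M)$ acts transitively on vertices, which (together with weak elimination of imaginaries and the fact that points are algebraically closed) gives $\mathrm{acl}^{eq}(\emptyset)=\mathrm{dcl}^{eq}(\emptyset)$, so that Corollary~\ref{probind} applies. The candidate formula is $\phi(x,y)$ asserting that $x$ and $y$ lie at graph-distance exactly $2$.

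Next I would verify the two required properties of $\phi(x,a)$ for $a\in M$. For non-forking: pick $c$ with $c\indep{}{}a$ realising the "generic" position, and use the description of $2$-point algebraic closures in Theorem~\ref{constr} to exhibit a realisation of $\phi(x,a)$ that is non-forking independent from $a$ over $\emptyset$ --- concretely, a generic vertex joined to $a$ by a generic path of length two --- so $\phi(x,a)\in F(\emptyset)^c$. For universally measure zero: take $a,b\in M$ sharing an edge (which exist by transitivity and the structure being a genuine graph). Since the shortest cycle has length $6$ (in particular there are no $5$-cycles), $\phi(x,a)\wedge\phi(x,b)$ is inconsistent: a common vertex at distance $2$ from both $a$ and $b$ would, together with the edge $ab$, yield a $5$-cycle (or a shorter cycle). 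Since $a$ and $b$ share an edge, by weak elimination of imaginaries $\mathrm{acl}^{eq}(a)\cap\mathrm{acl}^{eq}(b)=\mathrm{acl}^{eq}(\emptyset)$, i.e.\ $a\indep{}{a}b$. Then for any ergodic measure $\mu$ on $S_x(M^{eq})$, Corollary~\ref{probind} gives
\[
0=\mu(\phi(x,a)\wedge\phi(x,b))=\mu(\phi(x,a))\,\mu(\phi(x,b))=\mu(\phi(x,a))^2,
\]
the last equality by transitivity of $\mathrm{Aut}(M)$ on vertices. Hence $\mu(\phi(x,a))=0$ for every ergodic $\mu$, and by the ergodic decomposition (Corollary~\ref{decomp}) $\phi(x,a)$ is universally measure zero. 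This already proves Theorem~\ref{mainthm}.

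For the corollary, I would invoke Lemma~\ref{MSF=O}: if $\mathcal{M}$ were $MS$-measurable then $F(\emptyset)=\mathcal{O}(\emptyset)$, contradicting the existence of the formula $\phi(x,a)$ just constructed, which lies in $\mathcal{O}(\emptyset)\setminus F(\emptyset)$. Since $MS$-measurability of $\mathcal{M}$ is equivalent to that of $\mathcal{M}^{eq}$ and is a property of the theory, $T=\mathrm{Th}(\mathcal{M})$ is not $MS$-measurable, and $T$ is $\omega$\Hyphdash* categorical supersimple of finite $SU$-rank by Theorem~\ref{constr}.

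The main obstacle is not any single step but making sure the cited structural facts genuinely hold for the chosen Hrushovski construction: specifically, that one can simultaneously arrange transitivity on vertices, $SU$-rank $2$, no $5$-cycles, and weak elimination of imaginaries, and that $\phi(x,a)$ is not merely non-dividing but genuinely non-forking over $\emptyset$ (which here follows from supersimplicity, as forking equals dividing). These are exactly the properties packaged into Theorem~\ref{constr} and established via \cite{me}, so the real work has been offloaded there; the argument above is then a short deduction. A minor care point is checking that inconsistency of $\phi(x,a)\wedge\phi(x,b)$ uses the girth bound correctly --- one should note a common distance-$2$ neighbour $d$ of both $a$ and $b$ cannot be adjacent to $a$ or $b$ (else distance would be $1$, not $2$), forcing a cycle of length $\le 5$, which is excluded.
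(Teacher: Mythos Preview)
Your proposal is correct and follows essentially the same route as the paper: the paper deduces the corollary in one line from Theorem~\ref{mainthm} together with Lemma~\ref{MSF=O}, and you have simply unpacked the proof of Theorem~\ref{mainthm} (which is the preceding theorem in the paper, with the same formula $\phi(x,a)$, the same use of the edge $ab$ and the girth bound, and the same appeal to Corollary~\ref{probind} and Corollary~\ref{decomp}) before applying Lemma~\ref{MSF=O}. Your additional remarks justifying $\mathrm{acl}^{eq}(\emptyset)=\mathrm{dcl}^{eq}(\emptyset)$ and the inconsistency of $\phi(x,a)\wedge\phi(x,b)$ are welcome elaborations of points the paper leaves implicit.
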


The above was also answered in \cite{Measam} and \cite{me}, with $\omega$\Hyphdash* categorical Hrushovski constructions as counterexamples.

\begin{remark}
The point at the heart of our proof is that it is possible to build simple $\omega$\Hyphdash* categorical Hrushovski constructions which do not satisfy the strong independence theorem. This can be done in different relational languages and with different ranks for the final structure. For example, using a $3$-hypergraph, we can build a simple $\omega$\Hyphdash* categorical Hrushovski construction of $SU$-rank 1 with non-forking formulas which are universally measure zero. 
\end{remark}

\subsection{Conclusions}

Our result makes substantial use of the fact we work with not one-based $\omega$\Hyphdash* categorical structures. In fact, simple $\omega$\Hyphdash* categorical structures with $\mathrm{acl}^{eq}(\emptyset)=\mathrm{dcl}^{eq}(\emptyset)$ satisfy the independence theorem over the empty set. And in a one-based structure, satisfying this is equivalent to satisfying the strong independence theorem. Our counterexample relies on our simple structure not satisfying the strong independence theorem. And for this to be possible we must work with a not one-based structure. This raises two natural questions:

\begin{question}\label{Q1} Suppose that $\mathcal{M}$ is a simple $\omega$\Hyphdash* categorical structure satisfying the strong independence theorem over the algebraic closures of finite sets. Does this imply that $F(\emptyset)=\mathcal{O}(\emptyset)$?
\end{question}

\begin{question}\label{Q2} Suppose that $\mathcal{M}$ is a simple one-based $\omega$\Hyphdash* categorical structure. Does this imply that $F(\emptyset)=\mathcal{O}(\emptyset)$?
\end{question}

For both questions we may also ask whether the hypotheses mentioned (substituting "simple" by "supersimple") imply $MS$-measurability. Indeed, it would be interesting to know whether supersimple $\omega$\Hyphdash* categorical one-based structures are $MS$-measurable. The finite rank assumption is not needed since supersimple $\omega$\Hyphdash* categorical one-based structures are of finite rank \cite{WagEv}. For Question \ref{Q1}, we might also ask whether satisfying some sufficiently strong higher amalgamation property implies $F(\emptyset)=\mathcal{O}(\emptyset)$. In this article we have shown that satisfying $n$- independent amalgamation for all $n\in\mathbb{N}$ over finite algebraically closed sets is not sufficient, but an adequate generalisation of the strong independence theorem might work.\\

It is also relevant to note that satisfying the strong independence theorem over the algebraic closures of finite sets is a weaker condition than one-basedness. In fact, one can prove that the standard example of not-one based $\omega$\Hyphdash* categorical Hrushovski construction (e.g. as in \cite[Example 6.2.27]{Wagner:ST}) does satisfy the strong independence theorem over finite algebraically closed sets. In fact, we strongly suspect that another $\omega$\Hyphdash* categorical Hrushovski construction may yield an example of a structure with $F(\emptyset)\subsetneq \mathcal{O}(\emptyset)$ while satisfying the strong independence theorem (and arbitrarily strong higher amalgamation properties).\\

%We are also very interested in answering Question \ref{Q2}. All known examples of supersimple one-based $\omega$\Hyphdash* categorical structures of finite rank are $MS$-measurable. Conversely, the only known example of simple $\omega$\Hyphdash* categorical not one-based structures are Hrushovski constructions. It sounds plausible to conjecture that in these we generally have $F(\emptyset)\subsetneq \mathcal{O}(\emptyset)$. On one-hand, this suggests that being $\omega$\Hyphdash* categorical and $MS$-measurable implies being one-based. On the other, it suggests that we may be able to get a classification of invariant Keisler measures in some (not one-based) $\omega$\Hyphdash* categorical Hrushovski constructions. As a bold conjecture, we might suggest that in a not one-based $\omega$\Hyphdash* categorical Hrushovski construction the only ergodic measures come from invariant types.

%To conclude we should note that our assumption of $\mathrm{acl}^{eq}(\emptyset)=\mathrm{dcl}^{eq}(\emptyset)$ is somehow weaker that what might seem on first sight. For example, by \cite{Nubling}, if $\mathcal{M}$ is simple $\omega$\Hyphdash* categorical of $SU$-rank $1$, then it has an expansion to a structure $\mathcal{N}$ which is simple, $\omega$\Hyphdash* categorical and with $\mathrm{acl}^{eq}(\emptyset)=\mathrm{dcl}^{eq}(\emptyset)$.

%A \textbf{Keisler measure} on a structure $\mathcal{M}$ in the variable $x$ is a finitely additive probability measure on the definable subsets of $M^{x}$. Say that $X\subseteq M^x$ is defined by the formula $\phi(x, a)$. Then, we say that $X$ 

 \printbibliography

\end{document}